\documentclass{amsart}

\usepackage{amsmath,amsthm,amssymb}
\usepackage{mathtools}
\usepackage{tikz,tikz-3dplot,tikz-cd,graphicx}
\usepackage{youngtab}
\usepackage{adjustbox}
\usetikzlibrary{shapes.geometric}
\graphicspath{
  {./}
  {figures/}
  {figures/rectangles/}
  {figures/deg5example/}
  {figures/annuli/}
  {figures/polynomials/}
  {figures/brrect/}
}
\usepackage[hidelinks]{hyperref}
\usepackage{standalone}

\theoremstyle{plain}
\newtheorem{thm}{Theorem}[section]

\newtheorem{prop}[thm]{Proposition}

\newtheorem{mainthm}{Theorem}

\theoremstyle{definition}
\newtheorem{defn}[thm]{Definition}
\newtheorem{rem}[thm]{Remark}

\newcommand{\mathsc}[1]{{\normalfont\textsc{#1}}}

\newcommand{\R}{\mathbb{R}}

\newcommand{\Ib}{\mathbf{I}}\newcommand{\Jb}{\mathbf{J}}
\renewcommand{\S}{\mathbb{S}}

\newcommand{\bb}{\mathbf{b}}

\newcommand{\Qb}{\mathbf{Q}}

\newcommand{\mult}{\textsc{Mult}}

\newcommand{\sym}{\textsc{Sym}}

\newcommand{\conf}{\textsc{Conf}}

\newcommand{\poly}{\textsc{Poly}}

\newcommand{\bool}{\textsc{Bool}}

\newcommand{\bt}{\begin{tabular}}
\newcommand{\et}{\end{tabular}}

\newcommand{\comp}{\mathsc{Comp}}

\newcommand{\bv}{\mathbf{v}}
\newcommand{\ba}{\mathbf{a}}
\newcommand{\bm}{\mathbf{m}}
\newcommand{\bx}{\mathbf{x}}
\newcommand{\bz}{\mathbf{z}}

\newcommand{\by}{\mathbf{y}}

\newcommand{\closedint}{
  \begin{tikzpicture}[baseline]
    \draw[thick] (0,.1)--(.4,.1);
    \filldraw[color=black,fill=black!70!white] (0,.1) circle (.3mm) (.4,.1) circle (.3mm);
  \end{tikzpicture}
}

\newcommand{\closedsquare}{\protect
  \begin{tikzpicture}[baseline]
    \filldraw[thick,fill=white!80!black] (0,0) rectangle (.2,.2);
  \end{tikzpicture}
}

\newcounter{joncomments}

\newcounter{michaelcomments}

\begin{document}

\title{The Geometry of Rectangular Multisets}
\author{Michael Dougherty} 
\email{doughemj@lafayette.edu}
\address{Department of Mathematics, Lafayette College,
  Easton, PA 18042}
\author{Jon McCammond}
\email{jon.mccammond@math.ucsb.edu}
\address{Department of Mathematics, UC Santa Barbara, 
  Santa Barbara, CA 93106} 
\date{\today}

\begin{abstract}
    This article describes a natural piecewise Euclidean
    bi-simplicial cell structure for the space of $n$-element multisets
    in a fixed Euclidean rectangle. In particular, we highlight 
    some connections with spaces of complex polynomials and permutahedra. 
\end{abstract}

\maketitle

\section*{Introduction}

For each topological space $X$ and positive integer $n$, the symmetric 
group $\sym_n$ acts on the $n$-fold product $X^n$ by permuting coordinates,
and the quotient by this action is the \emph{multiset space} 
$\mult_n(X)$. 

If $\Ib$ is isometric to a compact interval in $\mathbb{R}$, then the space
$\Ib^n$ is an $n$-dimensional cube and $\mult_n(\Ib)$ is a 
metric simplex known as a standard $n$-dimensional orthoscheme. 
The face poset of $\mult_n(\Ib)$
is the poset of \emph{linear compositions of $n$},
which is denoted $\comp_n(\closedint)$ and is isomorphic to the
Boolean lattice with its minimum element removed.

In this article, we are interested in the multiset space $\mult_n(\Qb)$,
where $\Qb$ is isometric to a closed rectangle in the complex plane. Our first 
result is to describe a cell structure for $\mult_n(\Qb)$, which is
\emph{bi-simplicial} in the sense that every cell is a product of 
two simplices, as well as an analogous poset of \emph{rectangular compositions} $\comp_n(\closedsquare)$
which records the incidence of faces.

\begin{mainthm}[Theorem~\ref{thm:rectangular-multiset-face-poset}]\label{mainthm:cell-structure}
    Let $\Qb$ be a closed rectangle and let $n$ be a positive integer.
    Then the multiset space $\mult_n(\Qb)$ is isometric to a bi-simplicial
    piecewise Euclidean cell complex with face poset equal to $\comp_n(\closedsquare)$.
\end{mainthm}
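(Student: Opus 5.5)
We identify $\Qb$ with $\Ib\times\Jb$, a product of two compact intervals with the $\ell^2$ product metric, and realize $\mult_n(\Qb)$ as a quotient of $\Qb^n=\Ib^n\times\Jb^n$. The plan is to cut $\Qb^n$ into the product of the standard orthoscheme decompositions of the two cubes, and then show that $\sym_n$ permutes these product cells and that the quotient inherits a well-defined piecewise Euclidean cell structure.

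The orthoscheme decomposition of $\Ib^n$ is the one whose top cells are the regions $\{x_{\sigma(1)}\le\dots\le x_{\sigma(n)}\}$ and whose faces are indexed by ordered set partitions (the closed faces are the sets where the coordinates satisfy a prescribed weak order, including which blocks sit at the endpoints). Taking the product with the same structure on $\Jb^n$ gives a cell structure on $\Qb^n$ in which every cell is a product $\Delta\times\Delta'$ of two orthoschemes. This product cell structure is isometrically embedded in $\Qb^n$, since the metric on $\Qb^n$ is the $\ell^2$ product. The diagonal $\sym_n$-action permutes coordinates simultaneously in both factors, so it sends cells to cells. A cell is then determined by a pair of ordered set partitions of $[n]$: one records the $x$-order and one records the $y$-order.

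Next I will show that an element of $\sym_n$ stabilizing a cell fixes that cell pointwise. If $\pi$ stabilizes the cell given by a pair of weak orders, then $\pi$ preserves the blocks of both orders. It therefore preserves each block of their common refinement, and on the cell all coordinates within such a block are equal in both $x$ and $y$. So $\pi$ acts trivially on the cell. It follows that the quotient $\Qb^n/\sym_n$ is obtained by gluing one copy of each orbit of cells along isometries of faces. This makes $\mult_n(\Qb)$ a bi-simplicial piecewise Euclidean complex.

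The quotient metric must also agree with the path metric of this complex. The general fact is that the quotient of a geodesic space by a finite isometry group carries the quotient pseudometric $d([p],[q])=\min_\pi d(p,\pi q)$. Combined with the local finiteness of the cell structure, this shows the quotient metric coincides with the intrinsic metric of the glued complex, which gives the isometry.

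It remains to identify the face poset. A $\sym_n$-orbit of a pair of ordered set partitions, recorded as the matrix of block-intersection sizes, is exactly the data of a rectangular composition. Concretely, this is an array of nonnegative integers summing to $n$, with rows indexed by the $x$-blocks and columns by the $y$-blocks. Each row and each column has a nonzero entry, and boundary rows and columns may be empty according to the convention for $\comp_n(\closedsquare)$. Inclusion of cells corresponds to merging adjacent rows or columns, or to pushing them to the boundary. I would check that this matches the definition of $\comp_n(\closedsquare)$, presumably given later in the paper. I expect the main obstacle to be exactly this bookkeeping: orbits of pairs of orders must biject with rectangular compositions, and the face relation in the quotient must be precisely the order on $\comp_n(\closedsquare)$. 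In particular, distinct cells in one orbit that share a face must not create extra identifications. The pointwise-fixing argument above is what I would use to rule this out.
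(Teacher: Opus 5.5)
Your route is genuinely different from the paper's, and it is sound apart from one step. The paper works directly in $\mult_n(\Qb)$. It shows that each fiber $\comp^{-1}(\ba)$ is an open bi-simplex by deforming $\Re(\bz)$ and $\Im(\bz)$ independently. It then reads off closure relations by noting that a row-merge (column-merge) replaces the blue (red) orthoscheme factor by a facet. The facts that the quotient really is a cell complex, and that its metric is the piecewise Euclidean one, are simply taken as inherited from the product structure on $\Qb^n$.

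You work equivariantly upstairs instead:
\begin{itemize}
\item you index cells of $\Qb^n$ by pairs of ordered set partitions;
\item you use the block-intersection matrix as a complete orbit invariant;
\item you compare $\min_\pi d(p,\pi q)$ with the intrinsic length metric.
\end{itemize}
This costs more bookkeeping, but it supplies what the paper leaves implicit. The bijection between orbits and matrices also turns the identification with rectangular compositions into a clean counting statement. Your hedged guess about the definition matches the paper's: internal row and column sums are positive, and the order is generated by merging adjacent rows or columns. Merges into the boundary row or column are your ``pushing to the boundary''.

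The step that does not follow as written is this one. ``Stabilizers fix cells pointwise'' does not imply that the quotient is one copy of each orbit glued along face isometries, and it cannot by itself rule out extra identifications. For example, $\mathbb{Z}/3$ rotating the boundary of a triangle has trivial stabilizers on all vertices and edges. Yet each closed edge has its two endpoints identified in the quotient, which is a loop.

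What you also need is that no $\pi\in\sym_n$ carries a face of a cell $G$ to a \emph{different} face of the same $G$. Equivalently, each closed cell must map injectively into $\mult_n(\Qb)$. Your metric comparison needs this too, since otherwise a closed cell is folded and is not isometric to its Euclidean model.

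Here the fix is short. Every cell lies in some closed top cell $\Delta_\sigma\times\Delta'_\tau$. Each closed orthoscheme $\Delta_\sigma=\{x_\ell\le x_{\sigma(1)}\le\cdots\le x_{\sigma(n)}\le x_r\}$ meets every $\sym_n$-orbit in $\Ib^n$ in exactly one point, namely the sorted tuple. So if $p=(x,y)$ and $\pi p=(\pi x,\pi y)$ both lie in $\Delta_\sigma\times\Delta'_\tau$, then $\pi x=x$ and $\pi y=y$, hence $\pi p=p$.

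This strict-fundamental-domain fact subsumes your stabilizer argument. With it in place, the rest of your plan goes through: orbits correspond to intersection matrices, and faces correspond to merges of adjacent rows or columns.
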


For our second main theorem, we require two definitions. First, 
the \emph{dual graph} of a cell complex is formed by placing a vertex in the
interior of each top-dimensional cell, then connecting two vertices if their
corresponding cells share a face of codimension 1. Next, if $S$ is the
standard generating set of adjacent transpositions 
$\{\sigma_1,\ldots,\sigma_{n-1}\}$ for the symmetric group $\sym_n$,
then we define $\Gamma^{LR}(\sym_n,S)$ to be the graph obtained by overlaying
the left and right Cayley graphs of $\sym_n$ with respect to $S$. In other
words, the vertex set of $\Gamma^{LR}(\sym_n,S)$ is $\sym_n$ and 
there are two types of edges: for all $\pi,\pi' \in \sym_n$,
there is an edge between $\pi$ and $\pi'$ labeled ``$\sigma_i\cdot$'' 
if $\sigma_i \cdot \pi = \pi'$ where $\sigma_i \in S$, and
an edge between $\pi$ and $\pi'$ labeled ``$\cdot\sigma_j$''
if $\pi \cdot \sigma_j = \pi'$ where $\sigma_j \in S$.
Note in particular that this graph is not simple, as some pairs of
vertices have two edges between them.
We can now articulate the dual graph for 
$\mult_n(\Qb)$ as follows.

\begin{mainthm}[Theorem~\ref{thm:dual-graph}]\label{mainthm:dual-graph}
    The dual graph of the bi-simplicial cell structure on $\mult_n(\Qb)$ 
    is isomorphic to $\Gamma^{LR}(\sym_n,S)$.
\end{mainthm}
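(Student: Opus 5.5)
The plan is to read off the top-dimensional cells and their codimension-one adjacencies directly from the bi-simplicial structure of Theorem~\ref{mainthm:cell-structure}. I will assume that structure comes from writing $\Qb = \Ib \times \Jb$, with $\Ib$ and $\Jb$ compact intervals, and recording a multiset $\{z_1,\ldots,z_n\}$ by the multiset of its real parts in $\mult_n(\Ib)$, the multiset of its imaginary parts in $\mult_n(\Jb)$, and a combinatorial matching between them. If the structure is set up differently, this step has to be adapted first.

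\emph{Step 1: index the top cells by $\sym_n$.} A generic multiset has pairwise distinct real parts $x_1<\cdots<x_n$ and pairwise distinct imaginary parts $y_1<\cdots<y_n$. The point whose real part is $x_i$ has imaginary part $y_{\pi(i)}$ for a unique $\pi\in\sym_n$. Conversely, every choice of the two increasing sequences together with $\pi$ gives such a multiset. So the open top cells are exactly the sets $C_\pi \cong \mathrm{int}(\Delta^n)\times\mathrm{int}(\Delta^n)$, the product of interiors of two orthoschemes $\mult_n(\Ib)$ and $\mult_n(\Jb)$. In the language of $\comp_n(\closedsquare)$, these are the maximal rectangular compositions, namely permutation patterns.

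\emph{Step 2: classify the codimension-one faces of each $C_\pi$.} The closure of $C_\pi$ is a product of two $n$-simplices, so its facets are obtained by imposing one equality on one of the two factors. The first factor gives the conditions $x_1=\min\Ib$, $x_i=x_{i+1}$ for $1\le i\le n-1$, and $x_n=\max\Ib$; the second factor gives the analogous conditions on the $y$'s. The conditions involving $\min\Ib$, $\max\Ib$, $\min\Jb$, $\max\Jb$ lie on $\partial\Qb$, where only one top cell is incident, so they contribute no edges.

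Now take a generic point of the facet $x_i=x_{i+1}$. It is a multiset in which the two points with real rank $i$ and $i+1$ share a real part but have distinct imaginary parts, and all other coordinates are distinct. Perturbing the two real parts either way reaches exactly two top cells: $C_\pi$ and $C_{\pi\sigma_i}$, since swapping the real ranks $i,i+1$ precomposes $\pi$ with $\sigma_i$. Symmetrically, the facet $y_j=y_{j+1}$ is shared by exactly $C_\pi$ and $C_{\sigma_j\pi}$.

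\emph{Step 3: build the graph isomorphism.} Send $C_\pi$ to $\pi$. Then send the interior facet $x_i=x_{i+1}$ to the edge labeled $\cdot\sigma_i$, and the facet $y_j=y_{j+1}$ to the edge labeled $\sigma_j\cdot$. (Depending on the convention for composing permutations, left and right may be interchanged.) This map is a bijection on vertices, and it is a bijection on edges because each interior facet is a distinct cell of the complex. When $\pi\sigma_i=\sigma_j\pi$, the two facets are still distinct cells, since one lies in $\{x_i=x_{i+1}\}$ and the other in $\{y_j=y_{j+1}\}$. This accounts for the double edges of $\Gamma^{LR}(\sym_n,S)$.

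\emph{Main obstacle.} The hard step is Step 2: confirming that each interior facet of the cell structure in Theorem~\ref{mainthm:cell-structure} is incident to exactly two top cells, and that no two facets of $C_\pi$ are identified by the $\sym_n$-quotient. I would handle this through the face poset $\comp_n(\closedsquare)$. First I would check that the facet obtained by merging two adjacent columns, or two adjacent rows, of a permutation pattern is covered by exactly the two patterns obtained by splitting it back. Then I would check that distinct merges give distinct rectangular compositions.
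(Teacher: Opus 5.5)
Your proposal is correct and is essentially the paper's argument. Top cells correspond to padded permutation matrices, and interior codimension-one faces correspond to swapping two adjacent rows or two adjacent columns, that is, to multiplying by some $\sigma_i$ on one side or the other. Your perturbation argument, the observation that facets meeting $\partial\Qb$ give no edges, and your explicit treatment of double edges supply details the paper leaves implicit. The left/right mismatch you flag is purely a matter of convention (your $\pi\sigma_i$ is the paper's $\sigma_i\cdot\pi$), so it does not change the edge multiset or the conclusion.
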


Part of our interest in the space of rectangular multisets comes from
its relationship to spaces of complex polynomials. In particular, if
$\poly_{n+1}^{mc}(\Qb)$ denotes the space of monic degree-$(n+1)$
complex polynomials with roots centered at the origin and critical
values confined to the closed rectangle $\Qb$, then
$\poly_{n+1}^{mc}(\Qb)$ can be endowed with a bi-simplicial cell
structure called the ``branched rectangle complex''
\cite{gcp2}. Moreover, the function which sends each complex
polynomial to its multiset of critical values (known as the
\emph{Lyashko--Looijenga map}---see \cite{LaZv04} for a reference) is
a cellular stratified covering map from $\poly_{n+1}^{mc}(\Qb)$ to
$\mult_n(\Qb)$. In this sense, the cell structure defined in
Theorem~\ref{mainthm:cell-structure} gives rise to the cell structure
on the space of complex polynomials with critical values confined to
$\Qb$, and as shown in \cite{gcp2}, there is a quotient of this cell
complex by face identifications which is homeomorphic to the space of
monic polynomials with $n+1$ \emph{distinct} centered roots, a
classifying space for the $(n+1)$-strand braid group.

The remainder of the article is structured as
follows. Section~\ref{sec:perm-orth} describes our conventions and
definitions for permutations and orthoschemes.  In
Section~\ref{sec:pts-interval}, we examine the space of linear
multisets and define the corresponding poset of linear
compositions. We define the analogous space of rectangular multisets
and the poset of rectangular compositions in
Section~\ref{sec:pt-rect}, where we also provide the proofs of
Theorem~\ref{mainthm:cell-structure} and
Theorem~\ref{mainthm:dual-graph}. Finally, we provide some additional
ways to illustrate bi-orthoschemes via spines in
Section~\ref{sec:spines}.

\section{Permutations and orthoschemes}\label{sec:perm-orth}

In this section we give our conventions for permutations and review
some basic facts about orthoschemes. Throughout this article, let
$[n]$ denote the set $\{1,\ldots,n\}$ where $n$ is a positive integer.

\begin{defn}[Permutations]\label{def:perms}
  Let $S$ be a finite linearly ordered set, which we identify with
  $[n]$. Each permutation $\pi$ in the symmetric group $\sym_n$ can be
  viewed as a bijection between two copies of $S$: one on the left and
  one on the right. Moreover, $\pi$ can be represented as a product of
  disjoint cycles $(a_1\ a_2\ \cdots\ a_k)$, each of which indicates
  that the element $a_i$ in the left copy of $S$ is sent to $a_{i+1}$
  in the right copy (with indices evaluated mod $k$). There are then
  two natural actions of a permutation $\pi \in \sym_n$ on $S$; if
  $(a\ b\ c)$ is part of the disjoint cycle representation of $\pi$
  then the left action gives $a = \pi \cdot b$ (since $b$ on the right
  is sent to $a$ on the left) and the right action gives $b\cdot \pi =
  c$ (since $b$ on the left is sent to $c$ on the right).  That is,
  the left action uses the right copy of $S$ as its domain, and the
  right action uses the left copy of $S$ as its domain.  Finally, if
  $\sigma_i$ is the transposition $(i\ i+1)$ for $i \in [n-1]$, then
  $\{\sigma_1,\ldots,\sigma_{n-1}\}$ forms the standard generating set
  for $\sym_n$.
\end{defn}

Permutations can also be represented as matrices of zeros and ones.

\begin{defn}[Permutation matrices]\label{def:perm-mat}
  A permutation $\pi$ of $[n]$ can be represented as an $n\times n$
  \emph{permutation matrix} $M_\pi = (m_{ij})$ by setting
  $m_{ij} = 1$ if $i\cdot\pi = j$ and $m_{ij} = 0$ otherwise.
  Permutation composition corresponds to matrix multiplication in the
  sense that if $\alpha,\beta\in \sym_n$, then the permutation
  $\alpha\beta$ is represented by the matrix $M_\alpha M_\beta$. 
  In particular, note that the matrix product $M_{\sigma_i} M_\pi$ 
  is obtained from $M_\pi$ by swapping rows $i$ and $i+1$,
  whereas $M_\pi M_{\sigma_i}$ is obtained from $M_\pi$ by swapping
  columns $i$ and $i+1$. As a convenient visual shorthand, we can
  encode a permutation $\pi$ using an $n\times n$ grid in which dots 
  represent the positions of $1$'s in $M_\pi$---see 
  Figure~\ref{fig:dual-graph}. 
\end{defn}

Next, we define the metric simplices known as orthoschemes and
describe their appearance in $n$-dimensional cubes.

\begin{defn}[Orthoschemes and spines]\label{def:orthoscheme}
  Let $\bv_1,\ldots,\bv_n$ be an orthogonal set of vectors in
  Euclidean space. Fix a point $p_0$, and for each $i \in [n]$, define
  the point $p_i = p_0 + \bv_1 + \bv_2 + \cdots + \bv_i$. Then the
  convex hull of the points $p_0,\ldots,p_n$ is an
  \emph{$n$-dimensional orthoscheme}. An orthoscheme is said to be
  \emph{standard} if the vectors $\bv_1,\ldots,\bv_n$ all have equal
  length. Finally, the \emph{spine} of an orthoscheme is the subgraph
  of its 1-skeleton which consists of vertices $p_0,\ldots,p_n$ and
  edges $e_1,\ldots,e_n$ such that for all $i\in [n]$, $e_i$ connects
  $p_{i-1}$ to $p_i$.
\end{defn}

\begin{defn}[Cubes]\label{def:cubes}
  Let $\Ib = [x_\ell, x_r]$ be a closed interval in $\mathbb{R}$ with
  $x_\ell \neq x_r$. The $n$-cube $\Ib^n \subset \mathbb{R}^n$
  inherits a simplicial cell structure from its intersection with the
  real braid arrangement, which consists of the $\binom{n}{2}$
  hyperplanes described by equations of the form $x_i = x_j$ for
  distinct $i,j\in [n]$.  The $n!$ top-dimensional cells are standard
  $n$-dimensional orthoschemes.  The linear order on $\Ib$ extends to
  a coordinate-by-coordinate partial order on $\Ib^n$. Under this
  partial order, the vertices of each simplex are totally ordered, and
  these \emph{ordered simplices} turn $\Ib^n$ into an \emph{ordered
  simplicial complex}.
\end{defn}

\section{Linear multisets and compositions}\label{sec:pts-interval}

In this section we review the cell structure for the multiset
space of a compact interval (isometric to a standard orthoscheme) 
and its associated face poset of linear compositions (isomorphic
to a truncated Boolean lattice). For more details on the content
of this section, see \cite{dm-cnc}.

\begin{defn}[Multiset spaces]
  Let $X$ be a topological space and let $n$ be a positive integer.
  As described in the introduction, the symmetric group $\sym_n$ acts
  on $X^n$ by permuting coordinates, and the quotient is the
  \emph{multiset space} $\mult_n(X)$. Each point in this space is a
  \emph{multiset} which can be represented either formally as a
  function $\bm \colon X \to \mathbb{N}$ such that $\sum_{x \in X}
  \bm(x) = n$ (necessitating that $\bm(x) = 0$ for all but finitely
  many $x\in X$) or informally as a collection of $n$ points in $X$
  where multiplicity is permitted. Note that the multiset space
  $\mult_n(X)$ is distinct from the related configuration space
  $\conf_n(X)$ (in which points are subsets of $X$ with $n$ elements)
  and subset space $\exp_n(X)$ (in which points are subsets of $X$
  with at most $n$ elements); see e.g. \cite{tuffley02} for a
  comparison of the latter two cases.
\end{defn}

\begin{defn}[Linear multisets]\label{def:multiset-line}
  Let $\Ib = [x_\ell, x_r]$ as in Definition~\ref{def:cubes}.  The
  symmetric group $\sym_n$ acts on $\Ib^n$ by permuting coordinates in
  two ways: for each $\sigma \in \sym_n$ and $\bx =
  (x_1,x_2,\ldots,x_n)$ in $\Ib^n$, define the left action
  $\sigma\cdot\bx = (x_{\sigma\cdot 1}, x_{\sigma\cdot 2},\ldots,
  x_{\sigma\cdot n})$ and the right action $\bx \cdot \sigma =
  (x_{n\cdot\sigma},\ldots,x_{2\cdot\sigma},x_{1\cdot\sigma})$.  In
  both cases, the action preserves the ordering on each simplex, and
  the multiset space $\mult_n(\Ib)$ is a single ordered
  $n$-dimensional orthoscheme.
\end{defn}

Each linear multiset $\bx \in \mult_n(\Ib)$ can be uniquely denoted
with the shorthand notation $\bx = x_\ell^{m_\ell}x_1^{m_1} \cdots
x_k^{m_k} x_r^{m_r}$, where $x_\ell < x_1 < \cdots < x_k < x_r$ in
$\Ib$ and $m_i$ denotes the multiplicity of $x_i$ in $\bx$. Note that
this notation always includes the endpoints of $\Ib$, even if they do
not appear in $\bx$; in other words, $m_\ell$ and $m_r$ may be zero,
while $m_1,\ldots,m_k$ are strictly positive. As for the cell
structure, the simplices in $\mult_n(\Ib)$ admit a natural labeling by
compositions of $n$ in the following sense.

\begin{defn}[Linear compositions]\label{def:linear-compositions}
  Let $n$ be a positive integer and let $k$ be a nonnegative integer.
  A \emph{linear composition of $n$ with length $k+2$} is a row vector
  $\ba = [a_\ell\ a_1\ \cdots\ a_k\ a_r]$ of integers with entry sum
  $n$ such that $a_\ell = a_0$ and $a_r = a_{k+1}$ are nonnegative,
  and every other $a_i$ is positive for $i\in [k]$.  A composition of
  length $k+2$ can be \emph{merged} at position $i \in [k+1]$ by
  replacing the two entries $a_{i-1}$ and $a_{i}$ with the single
  entry $a_{i-1} + a_{i}$ to obtain a new composition of $n$ with
  length $k+1$.  Let $\comp_n(\closedint)$ denote the set of all
  linear compositions of $n$ together with the partial order $\ba \leq
  \bb$ if there is a sequence of merges which can be applied to $\bb$
  to obtain $\ba$.  Note that if $\ba$ is a vector of length $k+2$
  with nonnegative integer entries which sum to $n$, then $\ba$ is a
  linear composition of $n$ if and only if none of its internal
  (i.e. non-first, non-last) entries are zero.
\end{defn}

Note that $\comp_n(\closedint)$ has a unique maximum element
$[0\ 1\ \cdots\ 1\ 0]$ and each element below it is determined by
performing a subset of the $n+1$ possible merges.  In other words,
$\comp_n(\closedint)$ is isomorphic to $\bool_{n+1}^*$, the Boolean
lattice with its minimum element removed. Although $\bool_{n+1}^*$ has
a more natural definition as the nonempty subsets of $[n+1]$, our
definition for $\comp_n(\closedint)$ provides a direct connection with
the set of multisets in $\Ib$.

Define the surjective map $\comp \colon \mult_n(\Ib) \to \comp_n(\closedint)$ by
\[\comp(x_\ell^{m_\ell}x_1^{m_1} \cdots x_k^{m_k} x_r^{m_r}) = [m_\ell\ m_1\ \cdots\ m_k\ m_r]\]
and recall that the \emph{face poset} of a polyhedral cell complex is
the set of all nonempty faces, partially ordered by inclusion (see
e.g.  \cite{wachs07}). We then have the following proposition.

\begin{prop}[\protect{\cite[Example~4.4]{dm-cnc}}]\label{prop:linear-multiset-face-poset}
  The face poset for $\mult_n(\Ib)$ is isomorphic to
  $\comp_n(\closedint)$.
\end{prop}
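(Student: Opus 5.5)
We will identify $\mult_n(\Ib)$ with the fundamental domain $\Delta = \{\bx \in \Ib^n : x_1 \le x_2 \le \cdots \le x_n\}$. Every orbit of the coordinate-permuting action meets $\Delta$ in exactly one point, namely the sorted tuple. Writing $x_0 = x_\ell$ and $x_{n+1} = x_r$, $\Delta$ is cut out by the $n+1$ inequalities $x_{i-1} \le x_i$ for $i \in [n+1]$. It is the convex hull of the $n+1$ points $p_j = (x_\ell^{\,n-j}, x_r^{\,j})$ for $j = 0,\ldots,n$, meaning $n-j$ coordinates equal to $x_\ell$ followed by $j$ coordinates equal to $x_r$. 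These points are $p_0 + \bv_1 + \cdots + \bv_j$ for orthogonal vectors $\bv_j = (x_r - x_\ell)e_{n+1-j}$ of equal length, so $\Delta$ is the standard orthoscheme of Definition~\ref{def:orthoscheme}. Moreover, the quotient metric agrees with the Euclidean metric on $\Delta$, since $\Delta$ is a chamber of the braid arrangement.

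Next we describe the faces of $\Delta$. Since $\Delta$ is an $n$-simplex with $n+1$ facets, each given by turning one inequality $x_{i-1} \le x_i$ into an equality, its nonempty faces correspond bijectively to subsets $T \subseteq [n+1]$ that omit at least one index. Here $T$ is the set of inequalities forced to be equalities; taking $T = [n+1]$ would force $x_\ell = x_r$, which is impossible, so that case gives the empty face. Face inclusion corresponds to reverse inclusion of these sets $T$.

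We then match this description with the map $\comp$. Take a point $\bx$ in the relative interior of the face labelled $T$. Then $x_{i-1} = x_i$ exactly when $i \in T$. The maximal blocks of consecutive equal entries in $(x_0, x_1, \ldots, x_{n+1})$ are the distinct values of the multiset, with the endpoint values included. So $\comp(\bx)$ is obtained from the maximal composition $[0\ 1\ \cdots\ 1\ 0]$ (the entries for $x_0$, $x_1,\ldots,x_n$, and $x_{n+1}$) by performing the merges at the positions in $T$. Because merges at distinct positions commute, this gives an order-reversing-then-reversing bijection from faces to $\comp_n(\closedint)$. Concretely, $\ba \le \bb$ holds exactly when the merge set of $\ba$ contains that of $\bb$, which happens exactly when the face for $\ba$ is contained in the face for $\bb$. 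Hence $\comp$ is constant on open faces and induces the required poset isomorphism.

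The main obstacle is the bookkeeping in the last step. We must check that every element of $\comp_n(\closedint)$ arises from exactly one merge set $T$, which the remark before the proposition about $\bool_{n+1}^*$ supplies. We must also check that the endpoint conventions are right: $m_\ell$ and $m_r$ may be zero while interior blocks are nonempty, so a merge at position $1$ or $n+1$ means that point has moved to an endpoint. Finally, we must check that the excluded total merge matches the empty face.
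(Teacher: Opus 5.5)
Your proposal is correct and follows the same approach as the paper. Both arguments label each open face of the orthoscheme by its image under $\comp$ and match merging with inclusion of closures; your identification of $\mult_n(\Ib)$ with the sorted chamber $\Delta$, with faces indexed by the equality sets $T \subsetneq [n+1]$, just supplies the verification that the paper's short proof asserts without detail. (Merge positions reindex after each merge, so read ``performing the merges at the positions in $T$'' as deleting the gaps indexed by $T$ in $[0\ 1\ \cdots\ 1\ 0]$; the metric remark is not needed for a face-poset statement.)
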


\begin{proof}
  The preimage of each linear composition of $n$ under the $\comp$ map
  is an open cell in $\mult_n(\Ib)$, and $\ba \leq \bb$ in
  $\comp_n(\closedint)$ if and only if $\comp^{-1}(\ba)$ is a subset
  of the closure of $\comp^{-1}(\bb)$. Thus, each face of
  $\mult_n(\Ib)$ is uniquely labeled by a linear composition of $n$,
  and the partial order of merging in $\comp_n(\closedint)$
  corresponds exactly to the inclusion of faces in $\mult_n(\Ib)$.
\end{proof}

See Figure~\ref{fig:comp-3-orthoscheme} for an illustration of the
$3$-dimensional orthoscheme $\mult_3(\Ib)$ with cells labeled by the
linear compositions of $3$. To close this section, we note some useful
ways of visualizing the space of linear multisets and the
corresponding linear compositions.

\begin{figure}
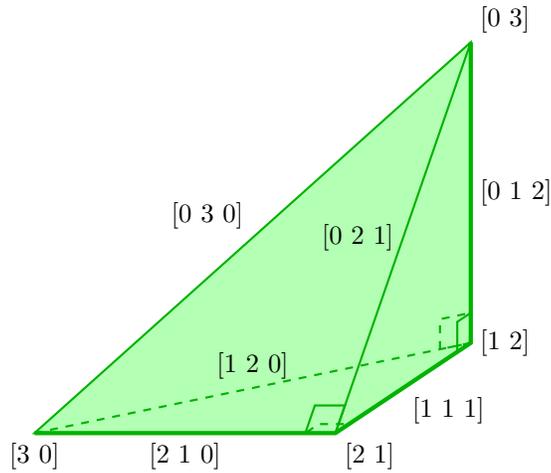

    \centering
    \includestandalone{fig-comp-3-orthoscheme}
    \caption{The space $\mult_3(\Ib)$ with labels on the $1$-skeleton.
    The unique $3$-cell is labeled $[0\ 1\ 1\ 1\ 0]$ and the four
    $2$-cells have labels $[1\ 1\ 1\ 0]$ (bottom), $[0\ 2\ 1\ 0]$ (front), 
    $[0\ 1\ 2\ 0]$ (back), and $[0\ 1\ 1\ 1]$ (right). The spine of
    the orthoscheme is illustrated with bold edges.}
    \label{fig:comp-3-orthoscheme}
\end{figure}

\begin{rem}[Illustrating orthoschemes]
    \label{rem:spine-orthoscheme}
    Given a multiset $\bx \in \mult_n(\Ib)$, it is easy to read off
    not only the linear composition $\comp(\bx)$, but also the linear compositions
    which label the vertices of the corresponding orthoscheme: each vertex
    label is obtained by preserving a single gap between entries and merging
    at all other positions in $\comp(\bx)$. For example, if 
    $\bx = x_\ell^3 x_1^4 x_2^1 x_3^2 x_r^1 \in \mult_{11}(\Ib)$,
    then $\bx$ lies in the interior of the $3$-dimensional orthoscheme labeled by the 
    linear composition $\comp(\bx) = [3\ 4\ 1\ 2\ 1]$, and the spine of this orthoscheme
    goes through the vertices $[3\ 8]$, $[7\ 4]$, $[8\ 3]$ and $[10\ 1]$, in order.
    Moreover, the multiplicities of $x_1$, $x_2$ and $x_3$ tell us that the
    edges connecting these vertices are of lengths $\sqrt{4}$, $\sqrt{1}$, and $\sqrt{2}$
    respectively.
    See Figure~\ref{fig:spine-orthoscheme} for an illustration which overlays the
    spine of the orthoscheme labeled by $\comp(\bx)$ on the multiset $\bx$.
\end{rem}

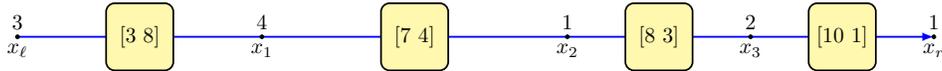
\begin{figure}
    \centering
    \begin{tikzpicture}

\tikzstyle{BlueLine}=[line width=0.3mm,color=blue,text=black]
\tikzstyle{BluePoly}=[BlueLine,fill=blue!20]
\tikzstyle{RedLine}=[line width=0.3mm,color=red,text=black]
\tikzstyle{RedPoly}=[RedLine,fill=red!20]
\tikzstyle{GreenLine}=[thick,color=black!30!green,text=black]
\tikzstyle{GreenPoly}=[thick,color=green!50!black,fill=green!30,join=bevel]
\tikzstyle{PurpleLine}=[line width=0.3mm,color=red!60!blue!80,text=black]
\tikzstyle{PurplePoly}=[PurpleLine,fill=red!40!blue!50]
\tikzstyle{OrangeLine}=[thick,color=orange]
\tikzstyle{GrayLine}=[thick,color=black!50!gray]
\tikzstyle{GrayPoly}=[GrayLine,fill=gray!20]
\tikzstyle{dot}=[shape=circle,draw,color=black,fill=black,inner sep=1.5pt]
\tikzstyle{opendot}=[shape=circle,draw,color=black,fill=white,inner sep=1.5pt]
\tikzstyle{bigdot}=[dot,inner sep=2pt]
\tikzstyle{littledot}=[dot,inner sep=0.75pt]
\tikzstyle{littleopendot}=[opendot,inner sep=0.75pt]
\tikzstyle{smalldot}=[dot,inner sep=1pt]
\tikzstyle{disk}=[thick,shape=circle,draw,color=black,fill=yellow!10]
\tikzstyle{lightdisk}=[thick,shape=circle,draw,color=black!40,fill=yellow!20]
\tikzstyle{plate}=[thick,shape=rectangle,draw,color=black,fill=yellow!40,rounded corners,minimum size=1.1cm]

\coordinate (xl) at (-10,0);
\coordinate (x1) at (-6,0);
\coordinate (x2) at (-1,0);
\coordinate (x3) at (2,0);
\coordinate (xr) at (5,0);

\draw[BlueLine, -latex] (xl) -> (xr);

\foreach \v in {xl, x1, x2, x3, xr} {
    \filldraw[dot,color=black] (\v) circle (.3mm);
}

\node[anchor=north] () at (xl) {$x_\ell$};
\node[anchor=north] () at (x1) {$x_1$};
\node[anchor=north] () at (x2) {$x_2$};
\node[anchor=north] () at (x3) {$x_3$};
\node[anchor=north] () at (xr) {$x_r$};

\node[plate] () at ($(xl)!0.5!(x1)$) {$[3\ 8]$};
\node[plate] () at ($(x1)!0.5!(x2)$) {$[7\ 4]$};
\node[plate] () at ($(x2)!0.5!(x3)$) {$[8\ 3]$};
\node[plate] () at ($(x3)!0.5!(xr)$) {$[10\ 1]$};

\node[anchor=south] () at (xl) {$3$};
\node[anchor=south] () at (x1) {$4$};
\node[anchor=south] () at (x2) {$1$};
\node[anchor=south] () at (x3) {$2$};
\node[anchor=south] () at (xr) {$1$};

\end{tikzpicture}
    \caption{The multiset $\bx \in \mult_{11}(\Ib)$ defined in Remark~\ref{rem:spine-orthoscheme}, 
    with the linear compositions labeling the spine of the $3$-dimensional orthoscheme overlaid.}
    \label{fig:spine-orthoscheme}
\end{figure}

\section{Rectangular multisets and compositions}\label{sec:pt-rect}

In this section we introduce a polyhedral cell structure for 
the space of $n$-multisets in a rectangle. First, we establish
a coloring mnemonic for product spaces.

\begin{rem}[Iconic colors]\label{rem:colors}
  When dealing with spaces such as rectangles which split as a direct
  product into horizontal and vertical factors, we adopt the following
  color convention. Things constructed from the
  horizontal portion are colored \emph{blue}, like the flat surface of
  water, and things constructed from the vertical portion are colored
  \emph{red}, like the rising flames of a fire.
\end{rem}

Next, we introduce some terminology for products of simplices and
then define rectangular multisets, the main object of study in this section.

\begin{defn}[Bi-simplices]
    \label{def:bisimplices}
    A \emph{bi-simplex} is a polytope which can be expressed as the product 
    of two simplices. When each simplex is equipped with the metric of
    an orthoscheme, we refer to the product as \emph{bi-orthoscheme} instead.
    Moreover, the \emph{spine} of a bi-orthoscheme is defined to be the
    subcomplex of its $2$-skeleton which is formed by taking the product of
    the spines of the factor orthoschemes.
\end{defn}

\begin{defn}[Rectangular multisets]
    \label{def:multiset-rect}
    Let $\Ib = [x_\ell, x_r]$ and $\Jb = [y_b,y_t]$ be closed intervals in
    $\mathbb{R}$ with $x_\ell \neq x_r$ and $y_b \neq y_t$, 
    and define $\Qb = \Ib \times \Jb$, viewed as the \emph{coordinate rectangle} 
    $\Qb = \Ib +\Jb i = \{ z \mid \Re(z) \in \Ib, \Im(z) \in \Jb \}$ 
    in $\mathbb{C}$. Following the convention established in 
    Remark~\ref{rem:colors}, we think of $\Ib$ as blue and $\Jb$ as red.
    The $n$-fold product $\Qb^n$ can then be viewed as the product of 
    a blue $n$-cube $\Ib^n$ and a red $n$-cube $\Jb^n$, each of which
    has a simplicial cell structure from Definition~\ref{def:multiset-line}.
    The result is a bi-simplicial cell structure on $\Qb^n$ with
    $(n!)^2$ top-dimensional cells, each of which is the product of two 
    $n$-dimensional orthoschemes (one blue, one red). 
    The action of $\sym_n$ on $\Qb^n$ by
    permuting coordinates can be viewed as the diagonal action on 
    $\Ib^n \times \Jb^n$ which restricts to the action from
    Definition~\ref{def:multiset-line} on each coordinate, and the quotient
    $\mult_n(\Qb)$ is a cell complex with $n!$ top-dimensional cells.
\end{defn}

Viewing $\Qb$ as a subset of the complex plane, the projection maps
onto real and imaginary parts give us projections 
$\Re \colon \Qb \to \Ib$ and
$\Im \colon \Qb \to \Jb$, and they naturally extend
to projections $\Re \colon \mult_n(\Qb) \to \mult_n(\Ib)$
and $\Im \colon \mult_n(\Qb) \to \mult_n(\Jb)$. Each
multiset $\bz \in \mult_n(\Qb)$ uniquely determines 
the pair $(\bx,\by) = (\Re(\bz),\Im(\bz))$, and we denote
$\bx$ and $\by$ as 
\begin{align*}
    \bx &= x_\ell^{c_\ell}x_1^{c_1}\cdots x_h^{c_h}x_r^{c_r} 
    &(x_\ell < x_1 < \cdots < x_h < x_r) \\
    \by &= y_b^{d_b}y_1^{d_1}\cdots y_k^{d_k}y_t^{d_t}
    &(y_b < y_1 < \cdots < y_k < y_t).
\end{align*}
As noted in Section~\ref{sec:pts-interval}, the endpoints $x_\ell$,
$x_r$, $y_b$ and $y_t$ appear in the notation for $\bx$ and $\by$
even if they are not elements of those multisets; the multiplicities for those
entries may be zero.
Moreover, the map $\mult_n(\Qb) \to \mult_n(\Ib) \times \mult_n(\Jb)$
given by $\bz \mapsto (\bx,\by)$ is a cellular surjection, and each point in
the interior of the bi-orthoscheme $\mult_n(\Ib) \times \mult_n(\Jb)$ has exactly $n!$ 
preimages in $\mult_n(\Qb)$. More generally, the number of preimages 
can be enumerated by counting certain types of matrices with specified row 
sums and column sums.

As in the case of linear multisets, the higher-dimensional cells of $\mult_n(\Qb)$
can be understood using a convenient combinatorial tool. To smooth the transition from 
multisets in planar rectangles to integers in matrices, we need to navigate a 
discrepancy between planar conventions and matrix conventions.

\begin{rem}[Reconciling orientations]\label{rem:rotate}
    The standard planar coordinate system with points $(x,y)$ has the positive $x$-axis 
    pointing to the right and the positive $y$-axis pointing upwards.  The standard discrete 
    matrix coordinate system has entries $(i,j)$ with the row index $i$ 
    increasing as one descends vertically, and the column index $j$ increasing as 
    one moves to the right. To reconcile the differences between these two conventions, 
    we choose to rotate the plane clockwise by a quarter-turn so that it appears in 
    ``matrix orientation'' (with the positive $x$-axis pointing downward and the positive 
    $y$-axis pointing to the right). See the lefthand side of Figure~\ref{fig:mult-rect-projections}, 
    where the planar rectangle $\Qb$ is in shown in ``matrix orientation''.  In fact, 
    matrix orientation is used consistently in all of the figures.  Our color conventions 
    (Remark~\ref{rem:colors}) remain based on ``planar orientation''.
\end{rem}    

\begin{rem}[Multiplicity matrix]\label{rem:mult-matrix}
  For each rectangular multiset $\bz$ of $n$ points in the rectangle $\Qb$, 
  we define a \emph{multiplicity matrix}  that records the multiplicities of points
  in $\bz$, with entry sum $n$.  We use the natural order of the reals in the 
  intervals $\Ib = [x_\ell,x_r]$ and $\Jb = [y_b,y_t]$ to index the rows and columns
  respectively.  When the rectangle $\Qb$ is viewed in matrix orientation (Remark~\ref{rem:rotate}),
  the multiset in $\Qb$ visually aligns with the multiplicity matrix. Figure~\ref{fig:mult-rect-projections} 
  illustrates the process: the multiset $\bz$ contains a point of multiplicity 
  $1$ at the point $(x_1,y_t)$ in $\Qb$, so the corresponding matrix
  has a $1$ in the $(2,4)$-entry. See Definition~\ref{def:mult-mat}
  for a formal description.
\end{rem}

\begin{defn}[Rectangular compositions]
    \label{def:rect-compositions}
    Let $n$ be a positive integer and let $h$ and $k$ be nonnegative integers. 
    Given an $(h+2)\times(k+2)$ matrix
    \[
        \ba = \begin{bmatrix}
            a_{\textcolor{orange}{\ell},\textcolor{blue}{b}} & a_{\textcolor{orange}{\ell},1} & 
            \cdots & a_{\textcolor{orange}{\ell},k} & a_{\textcolor{orange}{\ell},\textcolor{cyan}{t}} \\
            a_{1,\textcolor{blue}{b}} & a_{1,1} & \cdots & a_{1,k} & a_{1,\textcolor{cyan}{t}} \\
            \vdots & \vdots & \ddots & \vdots & \vdots \\
            a_{h,\textcolor{blue}{b}} & a_{h,1} & \cdots & a_{h,k} & a_{h,\textcolor{cyan}{t}} \\
            a_{\textcolor{red}{r},\textcolor{blue}{b}} & a_{\textcolor{red}{r},1} & 
            \cdots & a_{\textcolor{red}{r},k} & a_{\textcolor{red}{r},\textcolor{cyan}{t}}
        \end{bmatrix}
    \]  
    of nonnegative integers, let $c_\ell, c_1, \ldots, c_h, c_r$ denote the
    row sums of $\ba$ and let $d_b, d_1, \ldots, d_k, d_t$
    denote the column sums of $\ba$. We say that $\ba$ is a
    \emph{rectangular composition of $n$ with shape $(h+2) \times (k+2)$} if
    \begin{enumerate}
        \item $c_i > 0$ for all $i \in [h]$ ($c_\ell$ and $c_r$ can be zero);
        \item $d_j > 0$ for all $j \in [k]$ ($d_t$ and $d_b$ can be zero);
        \item $c_\ell + c_1 + \cdots + c_h + c_r = n$.
    \end{enumerate}
    A rectangular composition can be \emph{row-merged} (resp. \emph{column-merged})
    to obtain a new rectangular composition of $n$ with shape $(h+1)\times(k+2)$
    (resp. $(h+2)\times (k+1)$) by removing two adjacent rows (resp. columns) and
    inserting their sum. Let $\comp_n(\closedsquare)$ denote the set of all rectangular
    compositions of $n$ and define a partial order by declaring $\ba \leq \ba'$
    if and only if there is a sequence of row-merges and/or column-merges
    which transforms $\ba'$ into $\ba$. Finally, note that if $\ba$ is an
    $(h+2)\times(k+2)$ matrix with nonnegative integer entries which sum to $n$, 
    then $\ba$ is a rectangular composition of $n$ if and only if it has no internal
    rows or internal columns which sum to zero. 
\end{defn}

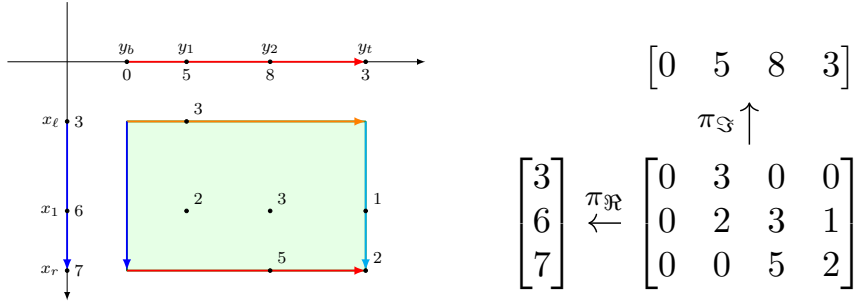
\begin{figure}
    \centering
    \begin{tikzpicture}

\tikzstyle{BlueLine}=[line width=0.3mm,color=blue,text=black]
\tikzstyle{BluePoly}=[BlueLine,fill=blue!20]
\tikzstyle{RedLine}=[line width=0.3mm,color=red,text=black]
\tikzstyle{RedPoly}=[RedLine,fill=red!20]
\tikzstyle{GreenLine}=[thick,color=black!30!green,text=black]
\tikzstyle{GreenPoly}=[thick,color=green!50!black,fill=green!30,join=bevel]
\tikzstyle{PurpleLine}=[line width=0.3mm,color=red!60!blue!80,text=black]
\tikzstyle{PurplePoly}=[PurpleLine,fill=red!40!blue!50]
\tikzstyle{OrangeLine}=[thick,color=orange]
\tikzstyle{GrayLine}=[thick,color=black!50!gray]
\tikzstyle{GrayPoly}=[GrayLine,fill=gray!20]
\tikzstyle{dot}=[shape=circle,draw,color=black,fill=black,inner sep=1.5pt]
\tikzstyle{opendot}=[shape=circle,draw,color=black,fill=white,inner sep=1.5pt]
\tikzstyle{bigdot}=[dot,inner sep=2pt]
\tikzstyle{littledot}=[dot,inner sep=0.75pt]
\tikzstyle{littleopendot}=[opendot,inner sep=0.75pt]
\tikzstyle{smalldot}=[dot,inner sep=1pt]
\tikzstyle{disk}=[thick,shape=circle,draw,color=black,fill=yellow!10]
\tikzstyle{lightdisk}=[thick,shape=circle,draw,color=black!40,fill=yellow!20]
\tikzstyle{plate}=[thick,shape=rectangle,draw,color=black,fill=yellow!10,rounded corners,minimum size=1.1cm]


\def\shade{.8}

\definecolor{c00}{rgb}{0,\shade,\shade} 
\definecolor{c60}{rgb}{0,0,\shade}
\definecolor{c120}{rgb}{\shade,0,\shade} 
\definecolor{c180}{rgb}{\shade,0,0}
\definecolor{c240}{rgb}{\shade,\shade,0} 
\definecolor{c300}{rgb}{0,\shade,0}
\definecolor{c360}{rgb}{0,\shade,\shade}

\colorlet{c10}{c60!16.7!c00} \colorlet{c20}{c60!33.3!c00}
\colorlet{c30}{c60!50!c00}   \colorlet{c40}{c60!66.7!c00}
\colorlet{c50}{c60!83.3!c00}

\colorlet{c70}{c120!16.7!c60} \colorlet{c80}{c120!33.3!c60}
\colorlet{c90}{c120!50!c60}   \colorlet{c100}{c120!66.7!c60}
\colorlet{c110}{c120!83.3!c60}

\colorlet{c130}{c180!16.7!c120} \colorlet{c140}{c180!33.3!c120}
\colorlet{c150}{c180!50!c120}   \colorlet{c160}{c180!66.7!c120}
\colorlet{c170}{c180!83.3!c120}

\colorlet{c190}{c240!16.7!c180} \colorlet{c200}{c240!33.3!c180}
\colorlet{c210}{c240!50!c180}   \colorlet{c220}{c240!66.7!c180}
\colorlet{c230}{c240!83.3!c180}

\colorlet{c250}{c300!16.7!c240} \colorlet{c260}{c300!33.3!c240}
\colorlet{c270}{c300!50!c240}   \colorlet{c280}{c300!66.7!c240}
\colorlet{c290}{c300!83.3!c240}

\colorlet{c310}{c360!16.7!c300} \colorlet{c320}{c360!33.3!c300}
\colorlet{c330}{c360!50!c300}   \colorlet{c340}{c360!66.7!c300}
\colorlet{c350}{c360!83.3!c300}

\colorlet{ca}{c00}
\colorlet{cb}{c60}
\colorlet{cc}{c180}
\colorlet{cd}{c300}

\colorlet{bckgrd}{green!10!white}
\colorlet{cA}{black!10!white}
\colorlet{cB}{black!20!white}
\colorlet{cC}{black!40!white}
\colorlet{cD}{black!60!white}
\colorlet{cE}{black!80!white}

\def\yb{1}
\def\yt{5}
\def\ym{3}
\def\yma{2}
\def\ymb{3.4}
\def\xl{1}
\def\xr{3.5}
\def\xm{2.25}
\def\xma{2.5}
\coordinate (xl) at (0,-\xl);
\coordinate (xr) at (0,-\xr);
\coordinate (xm) at (0,-\xm);
\coordinate (xma) at (0,-\xma);
\coordinate (yb) at (\yb,0);
\coordinate (yt) at (\yt,0);
\coordinate (ym) at (\ym,0);
\coordinate (yma) at (\yma,0);
\coordinate (ymb) at (\ymb,0);
\coordinate (v1) at (\yt,-\xr);
\coordinate (v2) at (\yt,-\xl);
\coordinate (v3) at (\yb,-\xl);
\coordinate (v4) at (\yb,-\xr);
\coordinate (t) at (\yt,-\xm);
\coordinate (l) at (\ym,-\xl);
\coordinate (b) at (\yb,-\xm);
\coordinate (r) at (\ym,-\xr);

\coordinate (z1) at (\yma,-\xma);
\coordinate (z2) at (\yma,-\xl);
\coordinate (z3) at (\ymb,-\xr);
\coordinate (z4) at (\ymb,-\xma);
\coordinate (z5) at (\yt,-\xr);
\coordinate (z6) at (\yt,-\xma);

\draw[-latex] (0,1)--(0,-4);
\draw[-latex] (-1,0)--(6,0);

\filldraw[GreenPoly, fill = bckgrd] (v1)--(v2)--(v3)--(v4)--cycle;

\draw[BlueLine,-latex] (xl)--(xr);

\draw[RedLine,-latex] (yb)--(yt);

\draw[RedLine,-latex,color=orange] (v3)--(v2);
\draw[RedLine,-latex,color=red] (v4)--(v1);
\draw[BlueLine,-latex,color=cyan] (v2)--(v1);
\draw[BlueLine,-latex,color=blue] (v3)--(v4);

\foreach \p in {xl,xr,yb,yt,xma,ymb,yma,z1,z2,z3,z4,z5,z6}{
    \filldraw[dot,color=black] (\p) circle (.3mm);
}

\node[anchor=east] at (xl) {\footnotesize $x_\ell$};
\node[anchor=west] at (xl) {\footnotesize $3$};
\node[anchor=east] at (xma) {\footnotesize $x_1$};
\node[anchor=west] at (xma) {\footnotesize $6$};
\node[anchor=south] at (ymb) {\footnotesize $y_2$};
\node[anchor=north] at (ymb) {\footnotesize $8$};
\node[anchor=east] at (xr) {\footnotesize $x_r$};
\node[anchor=west] at (xr) {\footnotesize $7$};
\node[anchor=south] at (yb) {\footnotesize $y_b$};
\node[anchor=north] at (yb) {\footnotesize $0$};
\node[anchor=south] at (yma) {\footnotesize $y_1$};
\node[anchor=north] at (yma) {\footnotesize $5$};
\node[anchor=south] at (yt) {\footnotesize $y_t$};
\node[anchor=north] at (yt) {\footnotesize $3$};

\node[anchor=south west] at (z1) {\footnotesize $2$};
\node[anchor=south west] at (z2) {\footnotesize $3$};
\node[anchor=south west] at (z3) {\footnotesize $5$};
\node[anchor=south west] at (z4) {\footnotesize $3$};
\node[anchor=south west] at (z5) {\footnotesize $2$};
\node[anchor=south west] at (z6) {\footnotesize $1$};

\end{tikzpicture}
    \hspace*{2em}
    \begin{tikzpicture}

    \node (c) at (0,0) {
        $\begin{bmatrix} 
            0 & 3 & 0 & 0  \\ 
            0 & 2 & 3 & 1  \\ 
            0 & 0 & 5 & 2  
        \end{bmatrix}$};
    
    \node (a) at (-2,0) {$\begin{bmatrix} 3 \\ 6 \\ 7 \end{bmatrix}$};

    \node (b) at (0,1.5) {$\begin{bmatrix} 0 & 5 & 8 & 3 \end{bmatrix}$};

    \draw[->] (c) -- (a) node[midway,above] {\footnotesize $\pi_\Re$};
    \draw[->] (c) -- (b) node[midway,left] {\footnotesize $\pi_\Im$};
\end{tikzpicture}
    \caption{On the left, an element $\bz \in \mult_{16}(\Qb)$ along with its
    projections $\bx = \Re(\bz) = x_\ell^3 x_1^6 x_r^7$ and
    $\by = \Im(\bz) = y_b^0 y_1^5 y_2^8 y_t^3$. 
    The labels on each point in $\Qb$ correspond to the multiplicity of 
    that point in $\bz$. On the right, the 
    rectangular composition $\comp(\bz)$ along with the linear compositions
    $\comp(\bx)$ and $\comp(\by)$.}
    \label{fig:mult-rect-projections}
\end{figure}

The multiplicities of an $n$-multiset in a rectangle $\Qb$ determine a rectangular
composition of $n$ in the following sense.

\begin{defn}[Multiplicity matrix]\label{def:mult-mat}
For a multiset $\bz \in \mult_n(\Qb)$, let 
$(\bx,\by) = (\Re(\bz),\Im(\bz))$ as above, and for each 
$i \in \{\ell,1,\ldots,h,r\}$ and $j \in \{b,1,\ldots,k,t\}$, define $m_{i,j}\geq 0$ 
to be the multiplicity of $(x_i,y_j)$ in $\bz$. 
Then we define
\[
    \comp(\bz) = \begin{bmatrix}
        m_{\textcolor{orange}{\ell},\textcolor{blue}{b}} & m_{\textcolor{orange}{\ell},1} & 
        \cdots & m_{\textcolor{orange}{\ell},k} & m_{\textcolor{orange}{\ell},\textcolor{cyan}{t}} \\
        m_{1,\textcolor{blue}{b}} & m_{1,1} & \cdots & m_{1,k} & m_{1,\textcolor{cyan}{t}} \\
        \vdots & \vdots & \ddots & \vdots & \vdots \\
        m_{h,\textcolor{blue}{b}} & m_{h,1} & \cdots & m_{h,k} & m_{h,\textcolor{cyan}{t}} \\
        m_{\textcolor{red}{r},\textcolor{blue}{b}} & m_{\textcolor{red}{r},1} & 
        \cdots & m_{\textcolor{red}{r},k} & m_{\textcolor{red}{r},\textcolor{cyan}{t}}
    \end{bmatrix}
\] 
and observe that this defines a surjective map 
$\comp\colon \mult_n(\Qb) \to \comp_n(\closedsquare)$. 
\end{defn}

The $n!$ maximal elements of $\comp_n(\closedsquare)$ are all of shape $(n+2) \times (n+2)$
and can be viewed as $n\times n$ permutation matrices which have been padded with 
zeros along the outside. The minimal elements of $\comp_n(\closedsquare)$ are $2\times 2$ matrices
with entries in $\{0,1,\ldots,n\}$ and sum $n$, and these are counted by the 
\emph{tetrahedral numbers} $\binom{n+3}{3}$. To see why, note that these $2\times 2$
matrices are in one-to-one correspondence with the integer lattice points in $\R^4$ which
lie on the tetrahedron formed by the intersection of the hyperplane 
$x_1 + x_2 + x_3 + x_4 = n$ with the positive orthant.

Each rectangular composition determines a pair of linear compositions, and these are compatible
with the projection maps $\Re: \mult_n(\Qb) \to \mult_n(\Ib)$ and $\Im: \mult_n(\Qb) \to \mult_n(\Jb)$
described above.

\begin{defn}[Composition projections]\label{def:comp-proj}
    There are two natural projection maps $\comp_n(\closedsquare) \to \comp_n(\closedint)$.
    If $\bz \in \mult_n(\Qb)$ with $\ba = \comp(\bz)$ the corresponding
    rectangular composition of $n$, then we define 
    $\pi_{\Re}(\ba) = \pi_{\Re}(\comp(\bz)) = \comp(\Re(\bz))$ and
    $\pi_{\Im}(\ba) = \pi_{\Im}(\comp(\bz)) = \comp(\Im(\bz))$. In other words, if 
    $\ba$ has row sums $c_\ell, c_1, \ldots, c_h, c_r$ and column sums 
    $d_b, d_1, \ldots, d_k, d_t$, then $\pi_{\Re}(\ba) = [c_\ell\ c_1\ \cdots\ c_h\ c_r]$
    and $\pi_{\Im}(\ba) = [d_b\ d_1\ \cdots\ d_k\ d_t]$. Moreover, recall that if $\ba$ is 
    an $(h+2)\times (k+2)$ matrix with nonnegative integer entries that sum to $n$, 
    then $\ba$ is a rectangular composition if and only if it has no internal rows 
    or internal columns which sum to zero; this is equivalent to saying that the
    projections $\pi_{\Re}(\ba)$ and $\pi_{\Im}(\ba)$ have no internal entries of zero, which
    is in turn equivalent to $\pi_{\Re}(\ba)$ and $\pi_{\Im}(\ba)$ being linear compositions of $n$.
\end{defn}

Analogous to the case of linear compositions, we have the following theorem.

\begin{thm}[Theorem~\ref{mainthm:cell-structure}]\label{thm:rectangular-multiset-face-poset}
    $\comp_n(\closedsquare)$ is the face poset for $\mult_n(\Qb)$.
\end{thm}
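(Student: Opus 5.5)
The plan is to mimic the proof of Proposition~\ref{prop:linear-multiset-face-poset}, using the product structure $\Qb^n = \Ib^n \times \Jb^n$. I will show three things about the map $\comp\colon \mult_n(\Qb)\to\comp_n(\closedsquare)$ of Definition~\ref{def:mult-mat}:
\begin{enumerate}
\item each fiber $\comp^{-1}(\ba)$ is an open cell;
\item the closure of that fiber is a closed bi-simplex which is a union of fibers;
\item $\ba\le\ba'$ exactly when $\comp^{-1}(\ba)\subseteq\overline{\comp^{-1}(\ba')}$.
\end{enumerate}

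\textbf{Open cells.} Fix $\ba$ of shape $(h+2)\times(k+2)$. A multiset $\bz$ with $\comp(\bz)=\ba$ is determined by two choices. The first is the $h$ interior real values $x_\ell<x_1<\cdots<x_h<x_r$. The second is the $k$ interior imaginary values $y_b<y_1<\cdots<y_k<y_t$. Given these, the multiplicities $m_{i,j}=a_{i,j}$ place the points at $(x_i,y_j)$. Conversely, any such choice of coordinates yields a multiset with composition $\ba$. This uses that no internal row or column sums to zero, so each $x_i$ and $y_j$ actually occurs. Hence $\comp^{-1}(\ba)$ is homeomorphic to the product of an open $h$-simplex and an open $k$-simplex. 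These are exactly the open cells $\comp^{-1}(\pi_\Re(\ba))$ and $\comp^{-1}(\pi_\Im(\ba))$ from Proposition~\ref{prop:linear-multiset-face-poset}.

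More precisely, I will check that the projection $\bz\mapsto(\Re\bz,\Im\bz)$ restricts to a bijection from $\comp^{-1}(\ba)$ onto $\comp^{-1}(\pi_\Re\ba)\times\comp^{-1}(\pi_\Im\ba)$. For the metric claim, I will use the fact that this cell is the image of an open bi-orthoscheme cell of $\Qb^n$ under the quotient. Locally, $\mult_n(\Qb)$ agrees with the product cell of $\Ib^n\times\Jb^n$ modulo $\sym_n$, where $\sym_n$ acts diagonally. The cell is obtained by choosing one permutation matrix filling of $\ba$, that is, an ordering of the $n$ points compatible with both coordinate orders. Its stabilizer acts trivially on that cell, so the piecewise Euclidean metric on it is that of the bi-orthoscheme. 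The quotient metric on $\mult_n(\Qb)$ is the one induced from $\Qb^n$, so the quotient is then isometric to this cell complex.

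\textbf{Closures and merges.} I will take the limit of a family in $\comp^{-1}(\ba')$ as some interior coordinates collide. When two consecutive values $x_{i-1}$ and $x_i$ coalesce, the two corresponding rows add, which is a row-merge. The same holds when $x_1\to x_\ell$ or $x_h\to x_r$, with the row merging into a boundary row. Colliding $y$ coordinates give column-merges in the same way. Every point of the closed bi-simplex arises from such a limit, so $\overline{\comp^{-1}(\ba')}$ is the union of $\comp^{-1}(\ba)$ over all $\ba$ obtained from $\ba'$ by merges. The converse direction is the same computation read backwards: every merge can be realized by such a degeneration. This gives item (3). Since distinct compositions give disjoint fibers, the face poset is $\comp_n(\closedsquare)$.

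\textbf{Main obstacle.} The delicate step is showing that the closure of $\comp^{-1}(\ba')$ is an embedded closed bi-simplex, so that the cells are genuinely attached and not self-glued. The danger is that distinct faces of the closed product simplex could map to the same fiber. I expect this to be ruled out as follows. A face of the closed product simplex is determined by its sets of row-merges and column-merges. The merged matrix records which positions were merged, because the row and column indexing of the original coordinates is retained. So distinct faces give distinct compositions $\ba$, and I will verify that the map from merge sets to merged matrices is injective. If this injectivity fails, two faces of one top cell would share a label. I would then fall back on the injectivity of the closed product cell into $\Qb^n/\sym_n$, which holds because points in a bi-orthoscheme cell of $\Qb^n$ are $\sym_n$-inequivalent.
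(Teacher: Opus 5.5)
Your proposal is correct and follows essentially the same route as the paper. Each fiber $\comp^{-1}(\ba)$ is an open bi-simplex: you parametrize it by the interior real and imaginary coordinates, where the paper deforms $\Re(\bz)$ and $\Im(\bz)$ independently. Row-merges and column-merges then correspond to collisions of consecutive coordinates, i.e.\ to passing to facets of the blue and red factors.

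Your extra check that the closed cells are embedded goes beyond what the paper writes, and the claim holds. Your stated reason, that merged matrices retain the original row and column indexing, is not accurate. The correct reason is that the partial row (resp.\ column) sums of $\ba'$ are strictly increasing, so the row and column sums of a merged matrix determine exactly which gaps were merged. Your fallback via the closed cell being a strict fundamental domain for the diagonal $\sym_n$-action also works.
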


\begin{proof}
    As described in Definition~\ref{def:multiset-rect}, each open cell
    in $\mult_n(\Qb)$ can be viewed as the product of an open blue orthoscheme
    and an open red orthoscheme. If $\bz \in \mult_n(\Qb)$, then the projections
    $\Re(\bz)$ and $\Im(\bz)$ label points in the blue and red factor orthoschemes,
    respectively. We can then deform $\bz$ by fixing $\Im(\bz)$ and allowing $\Re(\bz)$
    to vary without intersections; these deformed points have the same label in the
    red factor, but produce all possible points in the blue factor. Analogously,
    fixing $\Re(\bz)$ while varying $\Im(\bz)$ produces all possible points in the 
    red orthoscheme while fixing the point in the blue orthoscheme. Since these
    two deformations can be performed independently, we can obtain all points
    in the open cell containing $\mult_n(\Qb)$ in this manner, and it is clear that
    $\comp(\bz)$ remains constant throughout.  Moreover, we obtain all points in the preimage 
    $\comp^{-1}(\comp(z))$ through these deformations.
    In other words, the preimage
    of each rectangular composition of $n$ under the map $\comp \colon
    \mult_n(\Qb) \to \comp_n(\closedsquare)$ is an open bi-simplex.
    
    Furthermore, $\ba \leq \bb$ in $\comp_n(\closedsquare)$ if and only if
    $\ba$ can be obtained from $\bb$ by a sequence of row-merges and/or
    column-merges. If we consider $\comp^{-1}(\bb)$ as a product of 
    a blue orthoscheme and a red orthoscheme, then each row-merge (resp. column-merge)
    corresponds to replacing the blue orthoscheme (resp. red orthoscheme)
    with one of its facets, so the resulting open bi-simplex $\comp^{-1}(\ba)$
    is a subset of the closure of $\comp^{-1}(\bb)$. Since each cell of
    $\mult_n(\Qb)$ can thus be labeled by a unique rectangular composition
    in a manner which respects the incidence of cells, the proof is complete.
\end{proof}

There are many useful ways of illustrating bi-orthoschemes, some of which
are described in the following remark.

\begin{figure}
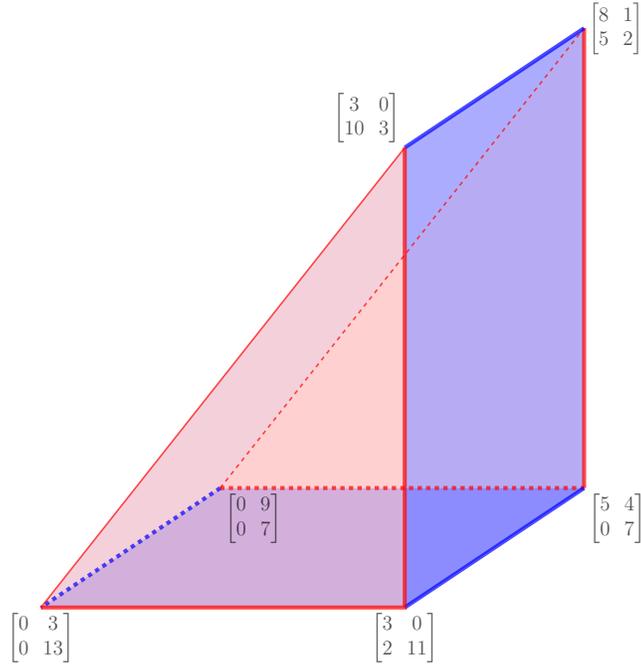

    \centering
    \includestandalone[width=0.7\textwidth]{fig-prism-rect-comp}
    \caption{The bi-orthoscheme labeled by the rectangular composition defined in 
    Remark~\ref{rem:spine-biorthoscheme} (the direct product of a red triangle with a blue
    line segment) with vertex set labeled. The spine of this bi-orthoscheme, consisting of 
    two rectangular faces along with their seven edges and six vertices, is highlighted.}
    \label{fig:prism-rect-comp}
\end{figure}

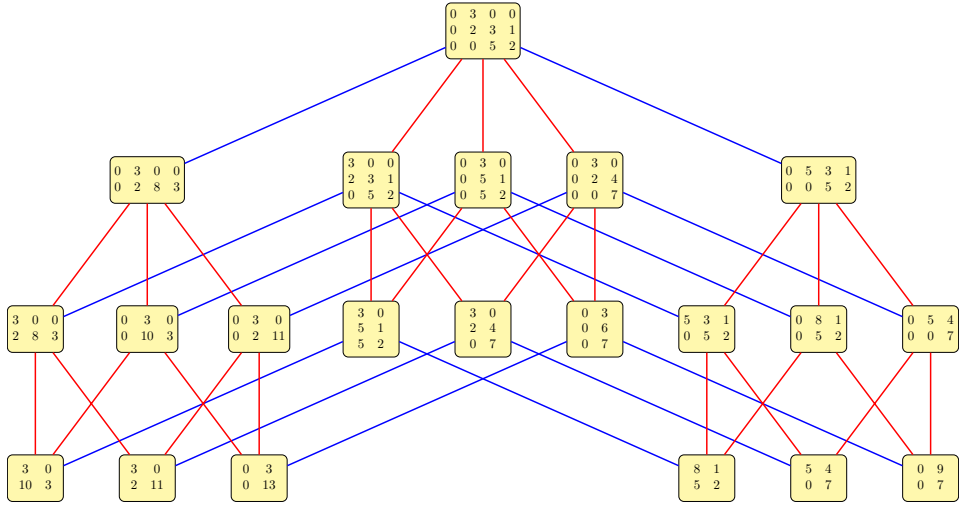
\begin{figure}
    \centering
    \begin{tikzpicture}

\tikzstyle{BlueLine}=[line width=0.4mm,color=blue,text=black]
\tikzstyle{BluePoly}=[BlueLine,fill=blue!20]
\tikzstyle{RedLine}=[line width=0.4mm,color=red,text=black]
\tikzstyle{RedPoly}=[RedLine,fill=red!20]
\tikzstyle{GreenLine}=[thick,color=black!30!green,text=black]
\tikzstyle{GreenPoly}=[thick,color=green!50!black,fill=green!30,join=bevel]
\tikzstyle{PurpleLine}=[line width=0.3mm,color=red!60!blue!80,text=black]
\tikzstyle{PurplePoly}=[PurpleLine,fill=red!40!blue!50]
\tikzstyle{OrangeLine}=[thick,color=orange]
\tikzstyle{GrayLine}=[thick,color=black!50!gray]
\tikzstyle{GrayPoly}=[GrayLine,fill=gray!20]
\tikzstyle{dot}=[shape=circle,draw,color=black,fill=black,inner sep=1.5pt]
\tikzstyle{opendot}=[shape=circle,draw,color=black,fill=white,inner sep=1.5pt]
\tikzstyle{bigdot}=[dot,inner sep=2pt]
\tikzstyle{littledot}=[dot,inner sep=0.75pt]
\tikzstyle{littleopendot}=[opendot,inner sep=0.75pt]
\tikzstyle{smalldot}=[dot,inner sep=1pt]
\tikzstyle{disk}=[thick,shape=circle,draw,color=black,fill=yellow!10]
\tikzstyle{lightdisk}=[thick,shape=circle,draw,color=black!40,fill=yellow!20]
\tikzstyle{plate}=[thick,shape=rectangle,draw,color=black,fill=yellow!10,rounded corners,minimum size=1.1cm]

\tikzstyle{RowNode} = [
    shape = rectangle, 
    draw = black,
    fill = yellow!40, 
    rounded corners, 
    minimum height = 1.25cm, 
    minimum width = 1.5cm
]

\tikzstyle{BlueLine}=[line width=0.4mm,color=blue,text=black]
\tikzstyle{BluePoly}=[BlueLine,fill=blue!20]
\tikzstyle{RedLine}=[line width=0.4mm,color=red,text=black]
\tikzstyle{RedPoly}=[RedLine,fill=red!20]
    \begin{scope}
        \node[RowNode] (a1) at (0,12) {$\begin{matrix} 0 & 3 & 0 & 0 \\ 0 & 2 & 3 & 1 \\ 0 & 0 & 5 & 2 \end{matrix}$};

        \node[RowNode] (b1) at (-9,8) {$\begin{matrix} 0 & 3 & 0 & 0 \\ 0 & 2 & 8 & 3 \end{matrix}$};
        \node[RowNode] (b2) at (-3,8) {$\begin{matrix} 3 & 0 & 0 \\ 2 & 3 & 1 \\ 0 & 5 & 2 \end{matrix}$};
        \node[RowNode] (b3) at (0,8) {$\begin{matrix} 0 & 3 & 0 \\ 0 & 5 & 1 \\ 0 & 5 & 2 \end{matrix}$};
        \node[RowNode] (b4) at (3,8) {$\begin{matrix} 0 & 3 & 0 \\ 0 & 2 & 4 \\ 0 & 0 & 7 \end{matrix}$};
        \node[RowNode] (b5) at (9,8) {$\begin{matrix} 0 & 5 & 3 & 1 \\ 0 & 0 & 5 & 2 \end{matrix}$};
    
        \node[RowNode] (c1) at (-12,4) {$\begin{matrix} 3 & 0 & 0 \\ 2 & 8 & 3 \end{matrix}$};
        \node[RowNode] (c2) at (-9,4) {$\begin{matrix} 0 & 3 & 0 \\ 0 & 10 & 3 \end{matrix}$};
        \node[RowNode] (c3) at (-6,4) {$\begin{matrix} 0 & 3 & 0 \\ 0 & 2 & 11 \end{matrix}$};
        \node[RowNode] (c4) at (-3,4) {$\begin{matrix} 3 & 0 \\ 5 & 1 \\ 5 & 2 \end{matrix}$};
        \node[RowNode] (c5) at (0,4) {$\begin{matrix} 3 & 0 \\ 2 & 4 \\ 0 & 7 \end{matrix}$};
        \node[RowNode] (c6) at (3,4) {$\begin{matrix} 0 & 3 \\ 0 & 6 \\ 0 & 7 \end{matrix}$};
        \node[RowNode] (c7) at (6,4) {$\begin{matrix} 5 & 3 & 1 \\ 0 & 5 & 2 \end{matrix}$};
        \node[RowNode] (c8) at (9,4) {$\begin{matrix} 0 & 8 & 1 \\ 0 & 5 & 2 \end{matrix}$};
        \node[RowNode] (c9) at (12,4) {$\begin{matrix} 0 & 5 & 4 \\ 0 & 0 & 7 \end{matrix}$};

        \node[RowNode] (d1) at (-12,0) {$\begin{matrix} 3 & 0 \\ 10 & 3 \end{matrix}$};
        \node[RowNode] (d2) at (-9,0) {$\begin{matrix} 3 & 0 \\ 2 & 11 \end{matrix}$};
        \node[RowNode] (d3) at (-6,0) {$\begin{matrix} 0 & 3 \\ 0 & 13 \end{matrix}$};
        \node[RowNode] (d4) at (6,0) {$\begin{matrix} 8 & 1 \\ 5 & 2 \end{matrix}$};
        \node[RowNode] (d5) at (9,0) {$\begin{matrix} 5 & 4 \\ 0 & 7 \end{matrix}$};
        \node[RowNode] (d6) at (12,0) {$\begin{matrix} 0 & 9 \\ 0 & 7 \end{matrix}$};

        \draw[BlueLine] (b1) -- (a1) -- (b5);
        \draw[BlueLine] (c1) -- (b2) -- (c7);
        \draw[BlueLine] (c2) -- (b3) -- (c8);
        \draw[BlueLine] (c3) -- (b4) -- (c9);
        \draw[BlueLine] (d1) -- (c4) -- (d4);
        \draw[BlueLine] (d2) -- (c5) -- (d5);
        \draw[BlueLine] (d3) -- (c6) -- (d6);

        \draw[RedLine] (a1) -- (b2);
        \draw[RedLine] (a1) -- (b3);
        \draw[RedLine] (a1) -- (b4);
        \draw[RedLine] (b2) -- (c5) -- (b4) -- (c6) -- (b3) -- (c4) -- (b2);

        \draw[RedLine] (b1) -- (c1);
        \draw[RedLine] (b1) -- (c2);
        \draw[RedLine] (b1) -- (c3);
        \draw[RedLine] (c1) -- (d2) -- (c3) -- (d3) -- (c2) -- (d1) -- (c1);

        \draw[RedLine] (b5) -- (c7);
        \draw[RedLine] (b5) -- (c8);
        \draw[RedLine] (b5) -- (c9);
        \draw[RedLine] (c7) -- (d5) -- (c9) -- (d6) -- (c8) -- (d4) -- (c7);
    \end{scope}
\end{tikzpicture}
    \caption{The elements in $\comp_{16}(\protect\closedsquare)$
    below the fixed rectangular composition defined in
    Remark~\ref{rem:spine-biorthoscheme}. The blue edges in the 
    order diagram correspond to row-merges and the red edges correspond to column-merges.}
    \label{fig:lower-set-rect-comp}
\end{figure}

\begin{rem}[Illustrating bi-orthoschemes]
    \label{rem:spine-biorthoscheme}
    Similar to the case for multisets in a line segment presented 
    in Remark~\ref{rem:spine-orthoscheme}, it is straightforward to read 
    off the labels for the bi-orthoscheme containing a given multiset 
    $\bz \in \mult_n(\Qb)$ and all of its faces. In 
    Figure~\ref{fig:mult-rect-projections},
    we see that a multiset in $\mult_{16}(\Qb)$ and its projections 
    immediately determine the corresponding rectangular composition and 
    projected linear compositions. 
    The rectangular composition $\comp(\bz)$ labels a triangular prism in 
    the cell structure for $\mult_{16}(\Qb)$, depicted in Figure~\ref{fig:prism-rect-comp}
    as the product of a red $2$-orthoscheme with label $[0\ 5\ 8\ 3]$ (i.e. a right triangle
    with legs of length $\sqrt{5}$ and $\sqrt{8}$) and a blue $1$-orthoscheme with label
    $[3\ 6\ 7]$ (i.e. an edge with length $\sqrt{6}$). 
    For the full face poset of this cell, we can take the subposet of $\comp_{16}(\closedsquare)$ consisting of all elements
    below $\comp(\bz)$, illustrated in Figure~\ref{fig:lower-set-rect-comp}.
\end{rem}

Next, we describe the structure of the top-dimensional
cells in $\mult_n(\Qb)$ by proving Theorem~\ref{mainthm:dual-graph}. 
Recall from the Introduction that $\Gamma^{LR}(\sym_n,S)$ is the graph 
obtained by overlaying the left and right Cayley graphs of $\sym_n$ on
a common vertex set.

\begin{thm}[Theorem~\ref{mainthm:dual-graph}]\label{thm:dual-graph}
    The dual graph of the bi-simplicial cell structure on $\mult_n(\Qb)$ 
    is isomorphic to $\Gamma^{LR}(\sym_n,S)$.
\end{thm}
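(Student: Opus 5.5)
By Theorem~\ref{thm:rectangular-multiset-face-poset}, the top-dimensional cells of $\mult_n(\Qb)$ are labeled by the maximal elements of $\comp_n(\closedsquare)$. These are the $(n+2)\times(n+2)$ matrices obtained by padding an $n\times n$ permutation matrix with a border of zeros. So the vertex set of the dual graph is identified with $\sym_n$ via $\pi \mapsto$ the padded matrix $M_\pi$ of Definition~\ref{def:perm-mat}. It remains to show that two top cells share a codimension-1 face exactly when the corresponding permutations differ by left or right multiplication by some $\sigma_i$. We must also get edge multiplicities right: each codimension-1 face shared by two top cells should contribute one edge.

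First I will classify the codimension-1 faces. A top cell $\comp^{-1}(M_\pi)$ is a product of two $n$-orthoschemes, so its facets are obtained by applying exactly one row-merge or one column-merge to the padded $M_\pi$. There are $n+1$ row-merge positions and $n+1$ column-merge positions. A row-merge of the first padding row with row $1$, or of row $n$ with the last padding row, yields a facet lying on the boundary of $\mult_n(\Qb)$: a point is pushed onto the side $\Re(z)=x_\ell$ or $x_r$. The same holds for the extreme column-merges. I will check that no other top cell lies above such a facet. Indeed, any maximal matrix above it is recovered by splitting the border row, and since the border row of a maximal matrix must be zero, the split is forced.

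The remaining facets come from merging internal rows $i,i+1$, or internal columns $j,j+1$, with $i,j\in[n-1]$. For an internal row-merge at $i$, the merged matrix has a single row containing two $1$'s, in some columns $p<q$. The maximal elements lying above this face are exactly the matrices obtained by splitting that row back into two rows with positive sums. There are exactly two such splits: one puts the $1$ in column $p$ in the upper row, the other puts the $1$ in column $q$ there. These two splits are $M_\pi$ and the matrix with rows $i$ and $i+1$ swapped, which by Definition~\ref{def:perm-mat} is $M_{\sigma_i}M_\pi = M_{\sigma_i\pi}$. Symmetrically, an internal column-merge at $j$ is shared exactly by $M_\pi$ and $M_\pi M_{\sigma_j} = M_{\pi\sigma_j}$.

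This gives, for each $\pi$ and each $i\in[n-1]$, one edge labeled ``$\sigma_i\cdot$'' and one labeled ``$\cdot\sigma_j$'' for each $j\in[n-1]$, matching $\Gamma^{LR}(\sym_n,S)$. The one subtle point is the multiplicity. When $\sigma_i\pi=\pi\sigma_j$, the two cells share two distinct codimension-1 faces, a row-merged one and a column-merged one. These have different shapes, so they are genuinely different faces, and the dual graph gets a double edge exactly as $\Gamma^{LR}$ does. I expect the main obstacle to be justifying that ``sharing a face'' in the quotient complex is faithfully detected by the poset $\comp_n(\closedsquare)$. That is, I must show that distinct facets of a top cell are never identified in the quotient, so no extra adjacencies arise. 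I would settle this by appealing to Theorem~\ref{thm:rectangular-multiset-face-poset}: distinct elements of the poset label distinct open cells, and the facets above were shown to have distinct labels.
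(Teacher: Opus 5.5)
Your proposal is correct and follows the same route as the paper: you identify the top cells with zero-padded permutation matrices via Theorem~\ref{thm:rectangular-multiset-face-poset}, and you observe that two top cells share a codimension-one face exactly when the matrices differ by swapping adjacent rows (left multiplication by $\sigma_i$) or adjacent columns (right multiplication by $\sigma_j$). Your version is more careful than the paper's, which leaves implicit three points you handle explicitly: you rule out the boundary facets as sources of edges, you show each internal facet lies in exactly two top cells, and you account for the double edges that occur when $\sigma_i\pi = \pi\sigma_j$.
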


\begin{proof}
    The top-dimensional cells of $\mult_n(\Qb)$ are labeled by the 
    maximal elements of $\comp_n(\closedsquare)$, which are 
    the $(n+2)\times(n+2)$ matrices formed by padding an $n\times n$
    permutation matrix with zeros on all four sides. Thus, each 
    vertex of the dual graph can be uniquely labeled by an element of
    $\sym_n$, which we represent with an $n\times n$ permutation matrix.
    
    Two top-dimensional cells of $\mult_n(\Qb)$ share a face of codimension 
    one if and only if the two corresponding permutation matrices can be 
    obtained from one another via a single swap of two rows or two columns. 
    Swapping rows $i$ and $i+1$ corresponds to left multiplication by
    the permutation matrix for the transposition $(i\ i+1)$, whereas
    swapping columns $i$ and $i+1$ corresponds to right multiplication
    by the same matrix. This means that the edges of the dual graph come from
    both the left and right Cayley graphs of $\sym_n$ with respect to the
    standard generating set of adjacent transpositions, and the proof 
    is complete.
\end{proof}

To clarify any confusion regarding the two actions by $\sym_n$, consider the
following example of applying the transposition $\sigma_2 = (2\ 3)$ to the
left and right of a $5\times 5$ permutation matrix:
\[
    \sigma_2 \cdot \begin{bmatrix}
        0 & 0 & 0 & 1 & 0 \\
        1 & 0 & 0 & 0 & 0 \\
        0 & 0 & 0 & 0 & 1 \\
        0 & 1 & 0 & 0 & 0 \\
        0 & 0 & 1 & 0 & 0
    \end{bmatrix} =
    \begin{bmatrix}
        0 & 0 & 0 & 1 & 0 \\
        0 & 0 & 0 & 0 & 1 \\
        1 & 0 & 0 & 0 & 0 \\
        0 & 1 & 0 & 0 & 0 \\
        0 & 0 & 1 & 0 & 0
    \end{bmatrix}
\]
and
\[
    \begin{bmatrix}
        0 & 0 & 0 & 1 & 0 \\
        1 & 0 & 0 & 0 & 0 \\
        0 & 0 & 0 & 0 & 1 \\
        0 & 1 & 0 & 0 & 0 \\
        0 & 0 & 1 & 0 & 0
    \end{bmatrix}
    \cdot \sigma_2 = 
    \begin{bmatrix}
        0 & 0 & 0 & 1 & 0 \\
        1 & 0 & 0 & 0 & 0 \\
        0 & 0 & 0 & 0 & 1 \\
        0 & 0 & 1 & 0 & 0 \\
        0 & 1 & 0 & 0 & 0
    \end{bmatrix}.
\]
Each matrix can also be viewed with a representative 
generic configuration of five dots in $\Qb$; in this
perspective, keeping in mind the matrix orientation 
described in Remark~\ref{rem:rotate}, the left
action permutes the ordering of points in the $x$ direction,
whereas the right action permutes the $y$ coordinates.
In Figure~\ref{fig:dual-graph}, we provide an illustration
of the full dual graph for $\mult_3(\Qb)$.

\begin{figure}
    \centering
    \begin{tikzpicture}

\tikzstyle{BlueLine}=[line width=0.3mm,color=blue,text=black]
\tikzstyle{BluePoly}=[BlueLine,fill=blue!20]
\tikzstyle{RedLine}=[line width=0.3mm,color=red,text=black]
\tikzstyle{RedPoly}=[RedLine,fill=red!20]
\tikzstyle{GreenLine}=[thick,color=black!30!green,text=black]
\tikzstyle{GreenPoly}=[thick,color=green!50!black,fill=green!60!black,join=bevel]
\tikzstyle{dot}=[shape=circle,draw,color=black,fill=black,inner sep=1.5pt]
\tikzstyle{bigdot}=[dot,inner sep=.8pt]
\tikzstyle{littledot}=[dot,inner sep=.8pt]

  \newcommand{\trigrid}{
    \newcommand{\s}{1}
    \draw[color=white,line width=5pt]  (-\s,-\s) -- (\s,-\s) -- (\s,\s) -- (-\s,\s) -- cycle;
    \filldraw[color=green!10!white] (-\s,-\s) -- (\s,-\s) -- (\s,\s) -- (-\s,\s) -- cycle;
    \foreach \x in {-\s/3,\s/3} {
      \draw[GreenLine,thin] (\x,-\s) -- (\x,\s);
      \draw[GreenLine,thin] (-\s,\x) -- (\s,\x);
    }
    \draw[BlueLine,thick,color=blue] (-\s,\s) -- (-\s,-\s);
    \draw[BlueLine,thick,color=cyan] (\s,\s) -- (\s,-\s);
    \draw[RedLine,thick,color=orange] (-\s,\s) -- (\s,\s);
    \draw[RedLine,thick,color=red] (-\s,-\s) -- (\s,-\s);
    \foreach \x in {-\s,\s} {
      \foreach \y in {-\s,\s} {
	\node[littledot,color=black] () at (\x,\y) {};	
      }
    }
  }
  
  \newcommand{\drawdots}[3]{
    \node[bigdot,GreenPoly] () at (2*\s/3*#1-4*\s/3,2*\s/3) {};
    \node[bigdot,GreenPoly] () at (2*\s/3*#2-4*\s/3,0) {};
    \node[bigdot,GreenPoly] () at (2*\s/3*#3-4*\s/3,-2*\s/3) {};
  }
  
  \def\E{5} 
  \def\e{4.7}
  \coordinate (bot) at (270:\E); \coordinate (bote) at (270:\e);
  \coordinate (a1) at (210:\E); \coordinate (a1e) at (210:\e);
  \coordinate (a2) at (330:\E); \coordinate (a2e) at (330:\e);
  \coordinate (b1) at (150:\E); \coordinate (b1e) at (150:\e);
  \coordinate (b2) at (30:\E); \coordinate (b2e) at (30:\e);
  \coordinate (top) at (90:\E); \coordinate (tope) at (90:\e);

  \def\a{\textcolor{red}{$\cdot\sigma_1$}} 
  \def\b{\textcolor{red}{$\cdot\sigma_2$}} 
  \def\c{\textcolor{blue}{$\sigma_1\cdot$}} 
  \def\d{\textcolor{blue}{$\sigma_2\cdot$}} 
  \def\f{9pt} 
  \draw[RedLine] (bot)--(a1) node[midway,shift={(240:\f)}] {\a};
  \draw[RedLine] (bot)--(a2) node[midway,shift={(300:\f)}] {\b};
  \draw[RedLine] (a1)--(b1) node[midway,shift={(180:\f)}] {\b};
  \draw[RedLine] (a2)--(b2) node[midway,shift={(0:\f)}] {\a};
  \draw[RedLine] (b1)--(top) node[midway,shift={(120:\f)}] {\a};
  \draw[RedLine] (b2)--(top) node[midway,shift={(60:\f)}] {\b};
  \draw[BlueLine] (bote)--(a1e) node[midway,shift={(60:\f)}] {\c};
  \draw[BlueLine] (bote)--(a2e) node[midway,shift={(120:\f)}] {\d};
  \draw[BlueLine] (a1e)--(b2e) node[pos=0.75,shift={(120:\f)}] {\d};
  \draw[BlueLine] (a2e)--(b1e) node[pos=0.75,shift={(60:\f)}] {\c};
  \draw[BlueLine] (b1e)--(tope) node[midway,shift={(300:\f)}] {\d};
  \draw[BlueLine] (b2e)--(tope) node[midway,shift={(240:\f)}] {\c};

  \begin{scope}[shift={(bot)}]
    \trigrid
    \drawdots{1}{2}{3}
  \end{scope}
  
  \begin{scope}[shift={(a1)}]
    \trigrid
    \drawdots{2}{1}{3}
  \end{scope}
  
  \begin{scope}[shift={(a2)}]
    \trigrid	
    \drawdots{1}{3}{2}
  \end{scope}
  
  \begin{scope}[shift={(b1)}]
    \trigrid	
    \drawdots{3}{1}{2}
  \end{scope}
  
  \begin{scope}[shift={(b2)}]
    \trigrid	
    \drawdots{2}{3}{1}
  \end{scope}
  
  \begin{scope}[shift={(top)}]
    \trigrid	
    \drawdots{3}{2}{1}
  \end{scope}

    \begin{scope}[shift={(-5,5.65)}]
        \draw[-latex] (-0.25,0) -- (1,0);
        \draw[-latex] (0,0.25) -- (0,-1);
        \node () at (1,-0.25) {\footnotesize $y$};
        \node () at (0.25,-1) {\footnotesize $x$};
    \end{scope}

\end{tikzpicture}
    \caption{The dual graph for $\mult_3(\Qb)$ can be viewed
    as the superposition of the left and right Cayley graphs for 
    $\sym_3$ with respect to the standard generating set 
    $\{\sigma_1,\sigma_2\}$. The vertices of the graph are labeled 
    by generic configurations representing the top-dimensional cells
    in $\mult_3(\Qb)$, from which the corresponding permutation matrix 
    can be read off by replacing dots with ones and blank squares 
    with zeros. The axes in the upper left serve as a reminder that 
    each vertex is illustrated using a plane which
    has been rotated to accommodate the matrix orientation.}
    \label{fig:dual-graph}
\end{figure}

A more robust picture can be obtained by looking at the full dual complex
of $\mult_n(\Qb)$ rather than just the dual graph. If $\Ib^n$ is an
$n$-dimensional cube, subdivided into $n!$ standard $n$-dimensional 
orthoschemes as in Definition~\ref{def:cubes}, then the dual complex
of this cell structure is a simple convex polytope called the
\emph{$(n-1)$-dimensional permutahedron}, and it can be defined
as the convex hull of the orbit of a generic configuration under
the (left) action of $\sym_n$ on $\Ib^n$. The $1$-skeleton of the 
permutahedron is then the (right) Cayley graph of $\sym_n$ with
respect to the standard generating set of adjacent transpositions.

For example, if $\Ib = [0,4]$ and $\bx = (1,2,3)$, then $\Ib^3$ 
is subdivided into six standard 3-dimensional orthoschemes, each of
which contains one point in the orbit of $\bx$ under the (left) action 
of $\sym_3$ in its interior. All six points lie on the plane
$x + y + z = 6$, and their convex hull is a regular hexagon with sides
of length $\sqrt{2}$. The vertices and edges of this hexagon form
the (right) Cayley graph of $\sym_3$ with respect to the generating set
$\{\sigma_1,\sigma_2\}$.

It follows that the product $\Qb^n = \Ib^n \times \Jb^n$ has a dual complex
which is the direct product of two $(n-1)$-dimensional permutahedra:
a blue permutahedron dual to the simplicial structure on $\Ib^n$ and
a red permutahedron dual to the simplicial structure on $\Jb^n$. 
The coordinate-permuting action of $\sym_n$ on $\Qb^n$ corresponds to 
the diagonal action on the product $\Ib^n \times \Jb^n$, with 
$\mult_n(\Qb)$ as the quotient. The dual complex for $\mult_n(\Qb)$ is 
thus the quotient of the product of two $(n-1)$-dimensional permutahedra
by the diagonal action of $\sym_n$.

\section{Bi-orthoscheme spines}\label{sec:spines}

In Definition~\ref{def:orthoscheme}, we defined the \emph{spine} of an
$n$-dimensional orthoscheme to be the subgraph of its $1$-skeleton
which contains all $n+1$ vertices, along with the $n$ edges which
connect one vertex to the next in the linear ordering.  In
Definition~\ref{def:bisimplices}, we defined the \emph{spine} of a
bi-orthoscheme to be the subcomplex of its $2$-skeleton which is
formed by the product of the spines of the two factor
simplices. Geometrically, the spine of an orthoscheme is a line
segment, and the spine of a bi-orthoscheme is a rectangle.  See
Figure~\ref{fig:spine-biorthoscheme} for an example, noting how the
spine can be overlaid on top of a representative rectangular multiset
$\bz$ so that each $2\times 2$ matrix $\begin{bmatrix} a & b \\ c &
  d \end{bmatrix}$ (labeling a vertex of the bi-orthoscheme spine) is
placed at a point where there are $a$ elements of $\bz$ to the upper
left, $b$ to the upper right, $c$ to the lower left, and $d$ to the
lower right. In this manner, the entire spine of a bi-orthoscheme in
$\mult_n(\Qb)$ can be easily read off of any point in its interior.

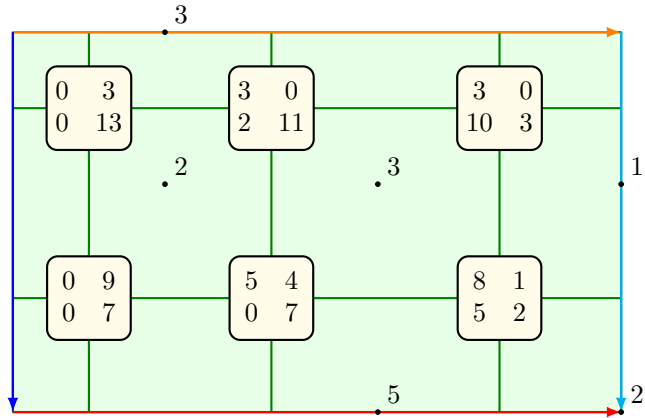
\begin{figure}
    \centering
    \begin{tikzpicture}

\tikzstyle{BlueLine}=[line width=0.3mm,color=blue,text=black]
\tikzstyle{BluePoly}=[BlueLine,fill=blue!20]
\tikzstyle{RedLine}=[line width=0.3mm,color=red,text=black]
\tikzstyle{RedPoly}=[RedLine,fill=red!20]
\tikzstyle{GreenLine}=[thick,color=green!50!black,text=black]
\tikzstyle{GreenPoly}=[thick,color=green!50!black,fill=green!30,join=bevel]
\tikzstyle{PurpleLine}=[line width=0.3mm,color=red!60!blue!80,text=black]
\tikzstyle{PurplePoly}=[PurpleLine,fill=red!40!blue!50]
\tikzstyle{OrangeLine}=[thick,color=orange]
\tikzstyle{GrayLine}=[thick,color=black!50!gray]
\tikzstyle{GrayPoly}=[GrayLine,fill=gray!20]
\tikzstyle{dot}=[shape=circle,draw,color=black,fill=black,inner sep=1.5pt]
\tikzstyle{opendot}=[shape=circle,draw,color=black,fill=white,inner sep=1.5pt]
\tikzstyle{bigdot}=[dot,inner sep=2pt]
\tikzstyle{littledot}=[dot,inner sep=0.75pt]
\tikzstyle{littleopendot}=[opendot,inner sep=0.75pt]
\tikzstyle{smalldot}=[dot,inner sep=1pt]
\tikzstyle{disk}=[thick,shape=circle,draw,color=black,fill=yellow!10]
\tikzstyle{lightdisk}=[thick,shape=circle,draw,color=black!40,fill=yellow!20]
\tikzstyle{plate}=[thick,shape=rectangle,draw,color=black,fill=yellow!10,rounded corners,minimum size=1.1cm]


\def\shade{.8}

\definecolor{c00}{rgb}{0,\shade,\shade} 
\definecolor{c60}{rgb}{0,0,\shade}
\definecolor{c120}{rgb}{\shade,0,\shade} 
\definecolor{c180}{rgb}{\shade,0,0}
\definecolor{c240}{rgb}{\shade,\shade,0} 
\definecolor{c300}{rgb}{0,\shade,0}
\definecolor{c360}{rgb}{0,\shade,\shade}

\colorlet{c10}{c60!16.7!c00} \colorlet{c20}{c60!33.3!c00}
\colorlet{c30}{c60!50!c00}   \colorlet{c40}{c60!66.7!c00}
\colorlet{c50}{c60!83.3!c00}

\colorlet{c70}{c120!16.7!c60} \colorlet{c80}{c120!33.3!c60}
\colorlet{c90}{c120!50!c60}   \colorlet{c100}{c120!66.7!c60}
\colorlet{c110}{c120!83.3!c60}

\colorlet{c130}{c180!16.7!c120} \colorlet{c140}{c180!33.3!c120}
\colorlet{c150}{c180!50!c120}   \colorlet{c160}{c180!66.7!c120}
\colorlet{c170}{c180!83.3!c120}

\colorlet{c190}{c240!16.7!c180} \colorlet{c200}{c240!33.3!c180}
\colorlet{c210}{c240!50!c180}   \colorlet{c220}{c240!66.7!c180}
\colorlet{c230}{c240!83.3!c180}

\colorlet{c250}{c300!16.7!c240} \colorlet{c260}{c300!33.3!c240}
\colorlet{c270}{c300!50!c240}   \colorlet{c280}{c300!66.7!c240}
\colorlet{c290}{c300!83.3!c240}

\colorlet{c310}{c360!16.7!c300} \colorlet{c320}{c360!33.3!c300}
\colorlet{c330}{c360!50!c300}   \colorlet{c340}{c360!66.7!c300}
\colorlet{c350}{c360!83.3!c300}

\colorlet{ca}{c00}
\colorlet{cb}{c60}
\colorlet{cc}{c180}
\colorlet{cd}{c300}

\colorlet{bckgrd}{green!10!white}
\colorlet{cA}{black!10!white}
\colorlet{cB}{black!20!white}
\colorlet{cC}{black!40!white}
\colorlet{cD}{black!60!white}
\colorlet{cE}{black!80!white}

  \def\xl{0}
  \def\xr{8}
  \def\xma{2}
  \def\xmb{4.8}
  \def\yb{0}
  \def\yt{5}
  \def\yma{3}
  \coordinate (xl) at (\xl,0);
  \coordinate (xr) at (\xr,0);
  \coordinate (xma) at (\xma,0);
  \coordinate (xmb) at (\xmb,0);
  \coordinate (yb) at (0,\yb);
  \coordinate (yt) at (0,\yt);
  \coordinate (yma) at (0,\yma);
  \coordinate (v1) at (\xr,\yt);
  \coordinate (v2) at (\xl,\yt);
  \coordinate (v3) at (\xl,\yb);
  \coordinate (v4) at (\xr,\yb);

    \coordinate (z1) at (\xma,\yma);
    \coordinate (z2) at (\xma,\yt);
    \coordinate (z3) at (\xmb,\yb);
    \coordinate (z4) at (\xmb,\yma);
    \coordinate (z5) at (\xr,\yb);
    \coordinate (z6) at (\xr,\yma);

    \filldraw[fill=white, color=white] ($(v1)+(1,1)$) -- ($(v2)+(-1,1)$) -- ($(v3)+(-1,-1)$) -- ($(v4)+(1,-1)$) -- cycle;
    
    \filldraw[GreenPoly, fill = bckgrd] (v1)--(v2)--(v3)--(v4)--cycle;

    \draw[GreenLine] ($(\xl,\yb)!0.5!(\xma,\yb)$) -- ($(\xl,\yt)!0.5!(\xma,\yt)$);
    \draw[GreenLine] ($(\xma,\yb)!0.5!(\xmb,\yb)$) -- ($(\xma,\yt)!0.5!(\xmb,\yt)$);
    \draw[GreenLine] ($(\xmb,\yb)!0.5!(\xr,\yb)$) -- ($(\xmb,\yt)!0.5!(\xr,\yt)$);

    \draw[GreenLine] ($(\xl,\yb)!0.5!(\xl,\yma)$) -- ($(\xr,\yb)!0.5!(\xr,\yma)$);
    \draw[GreenLine] ($(\xl,\yma)!0.5!(\xl,\yt)$) -- ($(\xr,\yma)!0.5!(\xr,\yt)$);

    \draw[RedLine,-latex,color=orange] (v2)--(v1);
    \draw[RedLine,-latex,color=red] (v3)--(v4);
    \draw[BlueLine,-latex,color=cyan] (v1)--(v4);
    \draw[BlueLine,-latex,color=blue] (v2)--(v3);

    \node[plate] () at ($(\xl,\yb)!0.5!(\xma,\yma)$) {
        $\begin{matrix}
            0 & 9 \\ 0 & 7 
        \end{matrix}$
    };
    \node[plate] () at ($(\xma,\yb)!0.5!(\xmb,\yma)$) {
        $\begin{matrix}
            5 & 4 \\ 0 & 7 
        \end{matrix}$
    };
    \node[plate] () at ($(\xmb,\yb)!0.5!(\xr,\yma)$) {
        $\begin{matrix}
            8 & 1 \\ 5 & 2
        \end{matrix}$
    };
    \node[plate] () at ($(\xl,\yma)!0.5!(\xma,\yt)$) {
        $\begin{matrix}
            0 & 3 \\ 0 & 13
        \end{matrix}$
    };
    \node[plate] () at ($(\xma,\yma)!0.5!(\xmb,\yt)$) {
        $\begin{matrix}
            3 & 0 \\ 2 & 11
        \end{matrix}$
    };
    \node[plate] () at ($(\xmb,\yma)!0.5!(\xr,\yt)$) {
        $\begin{matrix}
            3 & 0 \\ 10 & 3
        \end{matrix}$
    };
    
    \foreach \p in {z1,z2,z3,z4,z5,z6}{
      \filldraw[dot,color=black] (\p) circle (.3mm);}
    
    \node[anchor=south west] at (z1) {$2$};
    \node[anchor=south west] at (z2) {$3$};
    \node[anchor=south west] at (z3) {$5$};
    \node[anchor=south west] at (z4) {$3$};
    \node[anchor=south west] at (z5) {$2$};
    \node[anchor=south west] at (z6) {$1$};

\end{tikzpicture}
    \caption{The multiset $\bz \in \mult_{16}(\Qb)$ defined in 
    Remark~\ref{rem:spine-biorthoscheme}, with the corresponding
    bi-orthoscheme spine (which consists of six vertices, seven edges,
    and two rectangles) overlaid. The same spine is indicated with
    bold edges in Figure~\ref{fig:prism-rect-comp}.}
    \label{fig:spine-biorthoscheme}
\end{figure}

The spines of top-dimensional bi-orthoschemes are worth discussing separately.

\begin{rem}[Top-dimensional spines]\label{rem:top-spines}
See Figure~\ref{fig:generic} for a generic configuration $\bz$ of $4$ points in 
$\Qb$ and observe that it lies in the interior of the bi-orthoscheme in
$\mult_4(\Qb)$ which is labeled by the rectangular composition
\[
    \comp(\bz) = 
    \begin{bmatrix}
        0 & 0 & 0 & 0 & 0 & 0 \\
        0 & 0 & 1 & 0 & 0 & 0 \\
        0 & 0 & 0 & 1 & 0 & 0 \\
        0 & 1 & 0 & 0 & 0 & 0 \\
        0 & 0 & 0 & 0 & 1 & 0 \\
        0 & 0 & 0 & 0 & 0 & 0
    \end{bmatrix}
\]
in $\comp_4(\closedsquare)$. In the same figure, we see the spine of
the bi-orthoscheme labeled by 
$\comp(\bz)$ with the four elements of $\bz$ superimposed.  Note in
particular that each edge of the spine corresponds to moving $1$ from
one entry to another in the same row or column, so each edge has a
well-defined color depending on which column or row was left fixed:
blue for fixing the left column, cyan for the right column, red for
the bottom row, and orange for the top row. Furthermore, we can
observe that the points in $\bz$ correspond to the squares in the
spine with four distinct edge colors.
\end{rem}

\begin{figure}
    \centering
    \begin{tikzpicture}

\tikzstyle{BlueLine}=[line width=0.3mm,color=blue,text=black]
\tikzstyle{BluePoly}=[BlueLine,fill=blue!20]
\tikzstyle{RedLine}=[line width=0.3mm,color=red,text=black]
\tikzstyle{RedPoly}=[RedLine,fill=red!20]
\tikzstyle{GreenLine}=[thick,color=black!30!green,text=black]
\tikzstyle{GreenPoly}=[thick,color=green!50!black,fill=green!60!black,join=bevel]
\tikzstyle{OrangeLine}=[thick,color=orange]
\tikzstyle{GrayLine}=[thick,color=black!50!gray]
\tikzstyle{GrayPoly}=[GrayLine,fill=gray!20]
\tikzstyle{dot}=[shape=circle,draw,color=black,fill=black,inner sep=1.5pt]
\tikzstyle{bigdot}=[dot,inner sep=1.5pt]
\tikzstyle{littledot}=[dot,inner sep=.8pt]
\tikzstyle{disk}=[thick,shape=circle,draw,color=black,fill=yellow!10]
\tikzstyle{plate}=[thick,shape=rectangle,draw,color=black,fill=yellow!10,
	rounded corners,minimum size=1.1cm]

\tikzstyle{dot}=[shape=circle,draw,color=black,fill=black,inner sep=1.5pt]
\tikzstyle{opendot}=[dot,fill=white]
\tikzstyle{disk}=[thick,shape=circle,draw,color=black]

  \newcommand{\trigrid}{
    \newcommand{\s}{3cm}
    \draw[color=white,line width=5pt]  (-\s,-\s) -- (\s,-\s) -- (\s,\s) -- (-\s,\s) -- cycle;
    \filldraw[color=green!10!white] (-\s,-\s) -- (\s,-\s) -- (\s,\s) -- (-\s,\s) -- cycle;
    \foreach \x in {-\s/2,0,\s/2} {
      \draw[GreenLine,thin] (\x,-\s) -- (\x,\s);
      \draw[GreenLine,thin] (-\s,\x) -- (\s,\x);
    }
    \draw[BlueLine,thick,color=blue] (-\s,\s) -- (-\s,-\s);
    \draw[BlueLine,thick,color=cyan] (\s,\s) -- (\s,-\s);
    \draw[RedLine,thick,color=orange] (-\s,\s) -- (\s,\s);
    \draw[RedLine,thick,color=red] (-\s,-\s) -- (\s,-\s);
    \foreach \x in {-\s,\s} {
      \foreach \y in {-\s,\s} {
	\node[littledot,color=black] () at (\x,\y) {};	
      }
    }
  }
  
  \newcommand{\drawdots}[4]{
    \node[bigdot,GreenPoly] (a1) at (2*\s/4*#1-5*\s/4,3*\s/4) {};
    \node[bigdot,GreenPoly] (a2) at (2*\s/4*#2-5*\s/4,1*\s/4) {};
    \node[bigdot,GreenPoly] (a3) at (2*\s/4*#3-5*\s/4,-1*\s/4) {};
    \node[bigdot,GreenPoly] (a4) at (2*\s/4*#4-5*\s/4,-3*\s/4) {};
    \draw (a1) node[anchor=south] {};
    \draw (a2) node[anchor=south] {};
    \draw (a3) node[anchor=south] {};
    \draw (a4) node[anchor=south] {};
  }

  \trigrid
  \drawdots{2}{3}{1}{4}

\end{tikzpicture} \ \
    \begin{tikzpicture}

    \tikzstyle{BlueLine}=[line width=0.3mm,color=blue,text=black]
    \tikzstyle{BluePoly}=[BlueLine,fill=blue!20]
    \tikzstyle{RedLine}=[line width=0.3mm,color=red,text=black]
    \tikzstyle{RedPoly}=[RedLine,fill=red!20]
    \tikzstyle{GreenLine}=[thick,color=black!30!green,text=black]
    \tikzstyle{GreenPoly}=[thick,color=green!50!black,fill=green!60!black,join=bevel]
    \tikzstyle{OrangeLine}=[thick,color=orange]
    \tikzstyle{GrayLine}=[thick,color=black!50!gray]
    \tikzstyle{GrayPoly}=[GrayLine,fill=gray!20]
    \tikzstyle{dot}=[shape=circle,draw,color=black,fill=black,inner sep=1.5pt]
    \tikzstyle{bigdot}=[dot,inner sep=3pt]
    \tikzstyle{littledot}=[dot,inner sep=.8pt]
    \tikzstyle{disk}=[thick,shape=circle,draw,color=black,fill=yellow!10]
    \tikzstyle{plate}=[thick,shape=rectangle,draw,color=black,fill=yellow!10,
    	rounded corners,minimum size=1.2cm]
    
    \tikzstyle{dot}=[shape=circle,draw,color=black,fill=black,inner sep=1.5pt]
    \tikzstyle{opendot}=[dot,fill=white]
    \tikzstyle{disk}=[thick,shape=circle,draw,color=black]

  \def\s{3 cm}
  \def\S{2*\s}
  \def\r{.7cm}
  \def\R{0.5*\r}

  \node (b11) at (-2*\s,2*\s) {}; 
  \node (b12) at (-1*\s,2*\s) {};
  \node (b13) at (0*\s,2*\s) {}; 
  \node (b14) at (1*\s,2*\s) {}; 
  \node (b15) at (2*\s,2*\s) {}; 
  \node (b21) at (-2*\s,\s) {}; 
  \node (b22) at (-1*\s,\s) {}; 
  \node (b23) at (0*\s,\s) {}; 
  \node (b24) at (1*\s,\s) {}; 
  \node (b25) at (2*\s,\s) {}; 
  \node (b31) at (-2*\s,0*\s) {}; 
  \node (b32) at (-1*\s,0*\s) {}; 
  \node (b33) at (0*\s,0*\s) {}; 
  \node (b34) at (1*\s,0*\s) {}; 
  \node (b35) at (2*\s,0*\s) {}; 
  \node (b41) at (-2*\s,-1*\s) {}; 
  \node (b42) at (-1*\s,-1*\s) {}; 
  \node (b43) at (0*\s,-1*\s) {}; 
  \node (b44) at (1*\s,-1*\s) {}; 
  \node (b45) at (2*\s,-1*\s) {}; 
  \node (b51) at (-2*\s,-2*\s) {}; 
  \node (b52) at (-1*\s,-2*\s) {}; 
  \node (b53) at (0*\s,-2*\s) {}; 
  \node (b54) at (1*\s,-2*\s) {}; 
  \node (b55) at (2*\s,-2*\s) {};

    \filldraw[color=green!10!white] (-2*\s,-2*\s) -- (2*\s,-2*\s) -- 
    (2*\s,2*\s) -- (-2*\s,2*\s) -- cycle;

    \begin{scope}[line width = 2pt, color=red]

        \draw (b22) -- (b23);
        \draw (b32) -- (b33) -- (b34);

        \draw (b41) -- (b42) -- (b43) -- (b44);

        \draw (b51) -- (b52) -- (b53) -- (b54) -- (b55);
    \end{scope}

    \begin{scope}[line width = 2pt, color=orange]
        \draw (b11) -- (b12) -- (b13) -- (b14) -- (b15);

        \draw (b21) -- (b22);
        \draw (b23) -- (b24) -- (b25);

        \draw (b31) -- (b32);
        \draw (b34) -- (b35);

        \draw (b44) -- (b45);
    \end{scope}

    \begin{scope}[line width = 2pt, color=blue]
        \draw (b11) -- (b21) -- (b31) -- (b41) -- (b51);

        \draw (b12) -- (b22) -- (b32);
        \draw (b42) -- (b52);

        \draw (b23) -- (b33);
        \draw (b43) -- (b53);

        \draw (b44) -- (b54);
    \end{scope}

    \begin{scope}[line width = 2pt, color=cyan]
        \draw (b32) -- (b42);

        \draw (b13) -- (b23);
        \draw (b33) -- (b43);

        \draw (b14) -- (b24) -- (b34) -- (b44);

        \draw (b15) -- (b25) -- (b35) -- (b45) -- (b55);
    \end{scope}


\node[plate] () at (b11) {$\begin{matrix} 0 & 0 \\ 0 & 4 \end{matrix}$};
\node[plate] () at (b12) {$\begin{matrix} 0 & 0 \\ 1 & 3 \end{matrix}$};
\node[plate] () at (b13) {$\begin{matrix} 0 & 0 \\ 2 & 2 \end{matrix}$};
\node[plate] () at (b14) {$\begin{matrix} 0 & 0 \\ 3 & 1 \end{matrix}$};
\node[plate] () at (b15) {$\begin{matrix} 0 & 0 \\ 4 & 0 \end{matrix}$};

\node[plate] () at (b21) {$\begin{matrix} 0 & 1 \\ 0 & 3 \end{matrix}$};
\node[plate] () at (b31) {$\begin{matrix} 0 & 2 \\ 0 & 2 \end{matrix}$};
\node[plate] () at (b41) {$\begin{matrix} 0 & 3 \\ 0 & 1 \end{matrix}$};
\node[plate] () at (b51) {$\begin{matrix} 0 & 4 \\ 0 & 0 \end{matrix}$};

\node[plate] () at (b52) {$\begin{matrix} 1 & 3 \\ 0 & 0 \end{matrix}$};
\node[plate] () at (b53) {$\begin{matrix} 2 & 2 \\ 0 & 0 \end{matrix}$};
\node[plate] () at (b54) {$\begin{matrix} 3 & 1 \\ 0 & 0 \end{matrix}$};
\node[plate] () at (b55) {$\begin{matrix} 4 & 0 \\ 0 & 0 \end{matrix}$};

\node[plate] () at (b25) {$\begin{matrix} 1 & 0 \\ 3 & 0 \end{matrix}$};
\node[plate] () at (b35) {$\begin{matrix} 2 & 0 \\ 2 & 0 \end{matrix}$};
\node[plate] () at (b45) {$\begin{matrix} 3 & 0 \\ 1 & 0 \end{matrix}$};

\node[plate] () at (b22) {$\begin{matrix} 0 & 1 \\ 1 & 2 \end{matrix}$};

\node[plate] () at (b23) {$\begin{matrix} 1 & 0 \\ 1 & 2 \end{matrix}$};
\node[plate] () at (b24) {$\begin{matrix} 1 & 0 \\ 2 & 1 \end{matrix}$};

\node[plate] () at (b32) {$\begin{matrix} 0 & 2 \\ 1 & 1 \end{matrix}$};

\node[plate] () at (b42) {$\begin{matrix} 1 & 2 \\ 0 & 1 \end{matrix}$};

\node[plate] () at (b43) {$\begin{matrix} 2 & 1 \\ 0 & 1 \end{matrix}$};

\node[plate] () at (b44) {$\begin{matrix} 3 & 0 \\ 0 & 1 \end{matrix}$};

\node[plate] () at (b34) {$\begin{matrix} 2 & 0 \\ 1 & 1 \end{matrix}$};

\node[plate] () at (b33) {$\begin{matrix} 1 & 1 \\ 1 & 1 \end{matrix}$};

\node[bigdot,GreenPoly, anchor = south] (a1) at (-0.5*\s,1.5*\s) {};
\node[bigdot,GreenPoly] (a2) at (-1.5*\s,-0.5*\s) {};
\node[bigdot,GreenPoly] (a3) at (0.5*\s,0.5*\s) {};
\node[bigdot,GreenPoly] (a4) at (1.5*\s,-1.5*\s) {};
\draw (a1) node[anchor=south] {};
\draw (a2) node[anchor=south] {};
\draw (a3) node[anchor=south] {};
\draw (a4) node[anchor=south] {};

\end{tikzpicture}
    \caption{On the left, a generic configuration $\bz$ of 
    four distinct points in $\Qb$. On the right, the same configuration
    along with the spine of its corresponding bi-orthoscheme overlaid.}
    \label{fig:generic}
\end{figure}

Finally, we note that all of the spines for the top-dimensional
bi-orthoschemes are visible in a single $3$-dimensional graph.

\begin{rem}[Spines in a tetrahedral graph]
  In Section~\ref{sec:pt-rect} we noted that the vertices of
  $\mult_n(\Qb)$ correspond to the positive integer lattice points in
  the hyperplane $x_1 + x_2 + x_3 + x_4 = n$.  In
  Figure~\ref{fig:multiset-spine} we have placed the vertices
  according to their location in this tetrahedron with $n=4$, and we
  have drawn some edges connecting them.  In particular, we have drawn an edge
  when two vertex labels differ by moving one unit from an entry in the 
  $2 \times 2$ matrix to another in the same row or column.
  This means that the $1$-skeleton of the spine of any top-dimensional bi-orthoscheme 
  is visible in this graph (Remark~\ref{rem:top-spines}).  For example, the spine from 
  Figure~\ref{fig:generic} is highlighted in Figure~\ref{fig:multiset-spine-highlight}.
  Notice that the vertices labeled by $2\times 2$ matrices with either a row 
  or a column of zeros form a cyclic graph with $4n$ edges, and that all 
  of the top-dimensional spines contain this cycle of edges.
\end{rem}

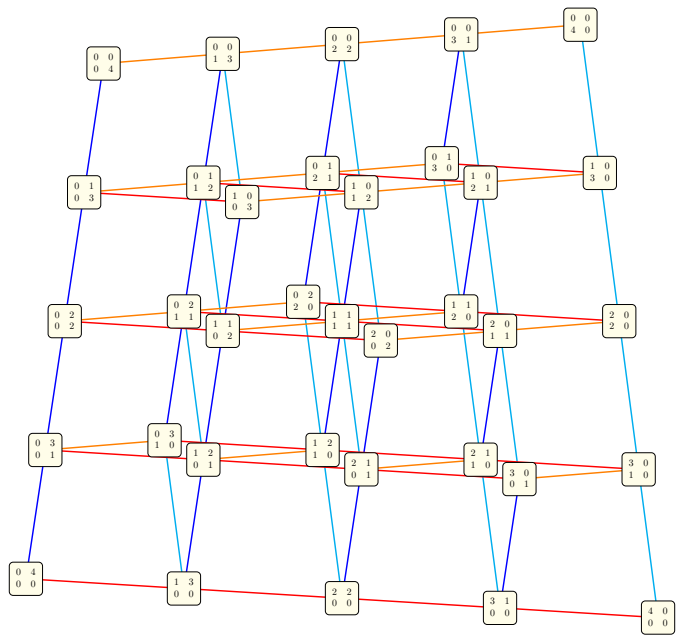
\begin{figure}
    \centering
    \begin{tikzpicture}

    \tikzstyle{BlueLine}=[line width=0.3mm,color=blue,text=black]
    \tikzstyle{BluePoly}=[BlueLine,fill=blue!20]
    \tikzstyle{RedLine}=[line width=0.3mm,color=red,text=black]
    \tikzstyle{RedPoly}=[RedLine,fill=red!20]
    \tikzstyle{GreenLine}=[thick,color=black!30!green,text=black]
    \tikzstyle{GreenPoly}=[thick,color=green!50!black,fill=green!60!black,join=bevel]
    \tikzstyle{OrangeLine}=[thick,color=orange]
    \tikzstyle{GrayLine}=[thick,color=black!50!gray]
    \tikzstyle{GrayPoly}=[GrayLine,fill=gray!20]
    \tikzstyle{dot}=[shape=circle,draw,color=black,fill=black,inner sep=1.5pt]
    \tikzstyle{bigdot}=[dot,inner sep=.8pt]
    \tikzstyle{littledot}=[dot,inner sep=.8pt]
    \tikzstyle{disk}=[thick,shape=circle,draw,color=black,fill=yellow!10]
    \tikzstyle{plate}=[thick,shape=rectangle,draw,color=black,fill=yellow!10,
    	rounded corners,minimum size=1.2cm]
    
    \tikzstyle{dot}=[shape=circle,draw,color=black,fill=black,inner sep=1.5pt]
    \tikzstyle{opendot}=[dot,fill=white]
    \tikzstyle{disk}=[thick,shape=circle,draw,color=black]

  \def\s{5 cm}
  \def\S{2*\s}
  \def\r{.7cm}
  \def\R{0.5*\r}

  \node (a11) at (-2*\s,2*\s) {}; 
  \node (a12) at (-1*\s,2*\s) {};
  \node (a13) at (0*\s,2*\s) {}; 
  \node (a14) at (1*\s,2*\s) {}; 
  \node (a15) at (2*\s,2*\s) {}; 
  \node (a21) at (-2*\s,\s) {}; 
  \node (a22) at (-1*\s,\s) {}; 
  \node (a23) at (0*\s,\s) {}; 
  \node (a24) at (1*\s,\s) {}; 
  \node (a25) at (2*\s,\s) {}; 
  \node (a31) at (-2*\s,0*\s) {}; 
  \node (a32) at (-1*\s,0*\s) {}; 
  \node (a33) at (0*\s,0*\s) {}; 
  \node (a34) at (1*\s,0*\s) {}; 
  \node (a35) at (2*\s,0*\s) {}; 
  \node (a41) at (-2*\s,-1*\s) {}; 
  \node (a42) at (-1*\s,-1*\s) {}; 
  \node (a43) at (0*\s,-1*\s) {}; 
  \node (a44) at (1*\s,-1*\s) {}; 
  \node (a45) at (2*\s,-1*\s) {}; 
  \node (a51) at (-2*\s,-2*\s) {}; 
  \node (a52) at (-1*\s,-2*\s) {}; 
  \node (a53) at (0*\s,-2*\s) {}; 
  \node (a54) at (1*\s,-2*\s) {}; 
  \node (a55) at (2*\s,-2*\s) {};
  
  \path (a11) +(2*\r,-2*\R) coordinate (b11);
  \path (a12) +(1*\r,-1*\R) coordinate (b12);
  \path (a13) +(0*\r,0*\R) coordinate (b13);
  \path (a14) +(-1*\r,1*\R) coordinate (b14);
  \path (a15) +(-2*\r,2*\R) coordinate (b15);
  
  \path (a21) +(1*\r,-1*\R) coordinate (b21);
  \path (a22) +(0*\r,0*\R) coordinate (b22a);
  \path (a22) +(2*\r,-2*\R) coordinate (b22b);
  \path (a23) +(-1*\r,1*\R) coordinate (b23a);
  \path (a23) +(1*\r,-1*\R) coordinate (b23b);
  \path (a24) +(-2*\r,2*\R) coordinate (b24a);
  \path (a24) +(0*\r,0*\R) coordinate (b24b);
  \path (a25) +(-1*\r,1*\R) coordinate (b25);
  
  \path (a31) +(0*\r,0*\R) coordinate (b31) {};
  \path (a32) +(-1*\r,1*\R) coordinate (b32a) {};
  \path (a32) +(1*\r,-1*\R) coordinate (b32b) {};
  \path (a33) +(-2*\r,2*\R) coordinate (b33a) {};
  \path (a33) +(0*\r,0*\R) coordinate (b33b) {};
  \path (a33) +(2*\r,-2*\R) coordinate (b33c) {};
  \path (a34) +(-1*\r,1*\R) coordinate (b34a) {};
  \path (a34) +(1*\r,-1*\R) coordinate (b34b) {};
  \path (a35) +(0*\r,0*\R) coordinate (b35) {};

  \path (a41) +(-1*\r,1*\R) coordinate (b41) {};
  \path (a42) +(-2*\r,2*\R) coordinate (b42a) {};
  \path (a42) +(0*\r,0*\R) coordinate (b42b) {};
  \path (a43) +(-1*\r,1*\R) coordinate (b43a) {};
  \path (a43) +(1*\r,-1*\R) coordinate (b43b) {};
  \path (a44) +(0*\r,0*\R) coordinate (b44a) {};
  \path (a44) +(2*\r,-2*\R) coordinate (b44b) {};
  \path (a45) +(1*\r,-1*\R) coordinate (b45) {};

  \path (a51) +(-2*\r,2*\R) coordinate (b51) {};
  \path (a52) +(-1*\r,1*\R) coordinate (b52) {};
  \path (a53) +(0*\r,0*\R) coordinate (b53) {};
  \path (a54) +(1*\r,-1*\R) coordinate (b54) {};
  \path (a55) +(2*\r,-2*\R) coordinate (b55) {};


    \begin{scope}[line width = 1.5pt, color=blue]
        \draw (b11) -- (b21) -- (b31) -- (b41) -- (b51);

        \draw (b12) -- (b22a) -- (b32a) -- (b42a);
        \draw (b22b) -- (b32b) -- (b42b) -- (b52);

        \draw (b13) -- (b23a) -- (b33a);
        \draw (b23b) -- (b33b) -- (b43a);
        \draw (b33c) -- (b43b) -- (b53);

        \draw (b14) -- (b24a);
        \draw (b24b) -- (b34a);
        \draw (b34b) -- (b44a);
        \draw (b44b) -- (b54);
    \end{scope}

    \begin{scope}[line width = 1.5pt, color=cyan]
        \draw (b12) -- (b22b);
        \draw (b22a) -- (b32b);
        \draw (b32a) -- (b42b);
        \draw (b42a) -- (b52);

        \draw (b13) -- (b23b) -- (b33c);
        \draw (b23a) -- (b33b) -- (b43b);
        \draw (b33a) -- (b43a) -- (b53);

        \draw (b14) -- (b24b) -- (b34b) -- (b44b);
        \draw (b24a) -- (b34a) -- (b44a) -- (b54);

        \draw (b15) -- (b25) -- (b35) -- (b45) -- (b55);
    \end{scope}

    \begin{scope}[line width = 1.5pt, color=red]
        \draw (b21) -- (b22b);
        \draw (b22a) -- (b23b);
        \draw (b23a) -- (b24b);
        \draw (b24a) -- (b25);

        \draw (b31) -- (b32b) -- (b33c);
        \draw (b32a) -- (b33b) -- (b34b);
        \draw (b33a) -- (b34a) -- (b35);

        \draw (b41) -- (b42b) -- (b43b) -- (b44b);
        \draw (b42a) -- (b43a) -- (b44a) -- (b45);

        \draw (b51) -- (b52) -- (b53) -- (b54) -- (b55);
    \end{scope}

    \begin{scope}[line width = 1.5pt, color=orange]
        \draw (b11) -- (b12) -- (b13) -- (b14) -- (b15);

        \draw (b21) -- (b22a) -- (b23a) -- (b24a);
        \draw (b22b) -- (b23b) -- (b24b) -- (b25);

        \draw (b31) -- (b32a) -- (b33a);
        \draw (b32b) -- (b33b) -- (b34a);
        \draw (b33c) -- (b34b) -- (b35);

        \draw (b41) -- (b42a);
        \draw (b42b) -- (b43a);
        \draw (b43b) -- (b44a);
        \draw (b44b) -- (b45);
    \end{scope}


\node[plate] () at (b11) {$\begin{matrix} 0 & 0 \\ 0 & 4 \end{matrix}$};
\node[plate] () at (b12) {$\begin{matrix} 0 & 0 \\ 1 & 3 \end{matrix}$};
\node[plate] () at (b13) {$\begin{matrix} 0 & 0 \\ 2 & 2 \end{matrix}$};
\node[plate] () at (b14) {$\begin{matrix} 0 & 0 \\ 3 & 1 \end{matrix}$};
\node[plate] () at (b15) {$\begin{matrix} 0 & 0 \\ 4 & 0 \end{matrix}$};

\node[plate] () at (b21) {$\begin{matrix} 0 & 1 \\ 0 & 3 \end{matrix}$};
\node[plate] () at (b31) {$\begin{matrix} 0 & 2 \\ 0 & 2 \end{matrix}$};
\node[plate] () at (b41) {$\begin{matrix} 0 & 3 \\ 0 & 1 \end{matrix}$};
\node[plate] () at (b51) {$\begin{matrix} 0 & 4 \\ 0 & 0 \end{matrix}$};

\node[plate] () at (b52) {$\begin{matrix} 1 & 3 \\ 0 & 0 \end{matrix}$};
\node[plate] () at (b53) {$\begin{matrix} 2 & 2 \\ 0 & 0 \end{matrix}$};
\node[plate] () at (b54) {$\begin{matrix} 3 & 1 \\ 0 & 0 \end{matrix}$};
\node[plate] () at (b55) {$\begin{matrix} 4 & 0 \\ 0 & 0 \end{matrix}$};

\node[plate] () at (b25) {$\begin{matrix} 1 & 0 \\ 3 & 0 \end{matrix}$};
\node[plate] () at (b35) {$\begin{matrix} 2 & 0 \\ 2 & 0 \end{matrix}$};
\node[plate] () at (b45) {$\begin{matrix} 3 & 0 \\ 1 & 0 \end{matrix}$};

\node[plate] () at (b22a) {$\begin{matrix} 0 & 1 \\ 1 & 2 \end{matrix}$};
\node[plate] () at (b22b) {$\begin{matrix} 1 & 0 \\ 0 & 3 \end{matrix}$};

\node[plate] () at (b23a) {$\begin{matrix} 0 & 1 \\ 2 & 1 \end{matrix}$};
\node[plate] () at (b23b) {$\begin{matrix} 1 & 0 \\ 1 & 2 \end{matrix}$};

\node[plate] () at (b24a) {$\begin{matrix} 0 & 1 \\ 3 & 0 \end{matrix}$};
\node[plate] () at (b24b) {$\begin{matrix} 1 & 0 \\ 2 & 1 \end{matrix}$};

\node[plate] () at (b32a) {$\begin{matrix} 0 & 2 \\ 1 & 1 \end{matrix}$};
\node[plate] () at (b32b) {$\begin{matrix} 1 & 1 \\ 0 & 2 \end{matrix}$};

\node[plate] () at (b42a) {$\begin{matrix} 0 & 3 \\ 1 & 0 \end{matrix}$};
\node[plate] () at (b42b) {$\begin{matrix} 1 & 2 \\ 0 & 1 \end{matrix}$};

\node[plate] () at (b43a) {$\begin{matrix} 1 & 2 \\ 1 & 0 \end{matrix}$};
\node[plate] () at (b43b) {$\begin{matrix} 2 & 1 \\ 0 & 1 \end{matrix}$};

\node[plate] () at (b44a) {$\begin{matrix} 2 & 1 \\ 1 & 0 \end{matrix}$};
\node[plate] () at (b44b) {$\begin{matrix} 3 & 0 \\ 0 & 1 \end{matrix}$};

\node[plate] () at (b34a) {$\begin{matrix} 1 & 1 \\ 2 & 0 \end{matrix}$};
\node[plate] () at (b34b) {$\begin{matrix} 2 & 0 \\ 1 & 1 \end{matrix}$};

\node[plate] () at (b33a) {$\begin{matrix} 0 & 2 \\ 2 & 0 \end{matrix}$};
\node[plate] () at (b33b) {$\begin{matrix} 1 & 1 \\ 1 & 1 \end{matrix}$};
\node[plate] () at (b33c) {$\begin{matrix} 2 & 0 \\ 0 & 2 \end{matrix}$};



\end{tikzpicture}
    \caption{The tetrahedral graph of the multiset space $\mult_4(\Qb)$.}
    \label{fig:multiset-spine}
\end{figure}

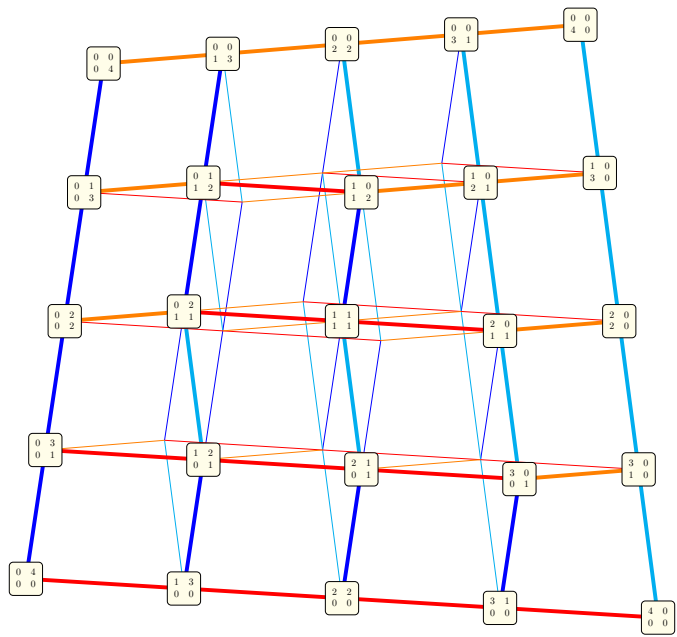
\begin{figure}
    \centering
    \begin{tikzpicture}

    \tikzstyle{BlueLine}=[line width=0.3mm,color=blue,text=black]
    \tikzstyle{BluePoly}=[BlueLine,fill=blue!20]
    \tikzstyle{RedLine}=[line width=0.3mm,color=red,text=black]
    \tikzstyle{RedPoly}=[RedLine,fill=red!20]
    \tikzstyle{GreenLine}=[thick,color=black!30!green,text=black]
    \tikzstyle{GreenPoly}=[thick,color=green!50!black,fill=green!60!black,join=bevel]
    \tikzstyle{OrangeLine}=[thick,color=orange]
    \tikzstyle{GrayLine}=[thick,color=black!50!gray]
    \tikzstyle{GrayPoly}=[GrayLine,fill=gray!20]
    \tikzstyle{dot}=[shape=circle,draw,color=black,fill=black,inner sep=1.5pt]
    \tikzstyle{bigdot}=[dot,inner sep=.8pt]
    \tikzstyle{littledot}=[dot,inner sep=.8pt]
    \tikzstyle{disk}=[thick,shape=circle,draw,color=black,fill=yellow!10]
    \tikzstyle{plate}=[thick,shape=rectangle,draw,color=black,fill=yellow!10,
    	rounded corners,minimum size=1.2cm]
    
    \tikzstyle{dot}=[shape=circle,draw,color=black,fill=black,inner sep=1.5pt]
    \tikzstyle{opendot}=[dot,fill=white]
    \tikzstyle{disk}=[thick,shape=circle,draw,color=black]

  \def\s{5 cm}
  \def\S{2*\s}
  \def\r{.7cm}
  \def\R{0.5*\r}

  \node (a11) at (-2*\s,2*\s) {}; 
  \node (a12) at (-1*\s,2*\s) {};
  \node (a13) at (0*\s,2*\s) {}; 
  \node (a14) at (1*\s,2*\s) {}; 
  \node (a15) at (2*\s,2*\s) {}; 
  \node (a21) at (-2*\s,\s) {}; 
  \node (a22) at (-1*\s,\s) {}; 
  \node (a23) at (0*\s,\s) {}; 
  \node (a24) at (1*\s,\s) {}; 
  \node (a25) at (2*\s,\s) {}; 
  \node (a31) at (-2*\s,0*\s) {}; 
  \node (a32) at (-1*\s,0*\s) {}; 
  \node (a33) at (0*\s,0*\s) {}; 
  \node (a34) at (1*\s,0*\s) {}; 
  \node (a35) at (2*\s,0*\s) {}; 
  \node (a41) at (-2*\s,-1*\s) {}; 
  \node (a42) at (-1*\s,-1*\s) {}; 
  \node (a43) at (0*\s,-1*\s) {}; 
  \node (a44) at (1*\s,-1*\s) {}; 
  \node (a45) at (2*\s,-1*\s) {}; 
  \node (a51) at (-2*\s,-2*\s) {}; 
  \node (a52) at (-1*\s,-2*\s) {}; 
  \node (a53) at (0*\s,-2*\s) {}; 
  \node (a54) at (1*\s,-2*\s) {}; 
  \node (a55) at (2*\s,-2*\s) {};
  
  \path (a11) +(2*\r,-2*\R) coordinate (b11);
  \path (a12) +(1*\r,-1*\R) coordinate (b12);
  \path (a13) +(0*\r,0*\R) coordinate (b13);
  \path (a14) +(-1*\r,1*\R) coordinate (b14);
  \path (a15) +(-2*\r,2*\R) coordinate (b15);
  
  \path (a21) +(1*\r,-1*\R) coordinate (b21);
  \path (a22) +(0*\r,0*\R) coordinate (b22a);
  \path (a22) +(2*\r,-2*\R) coordinate (b22b);
  \path (a23) +(-1*\r,1*\R) coordinate (b23a);
  \path (a23) +(1*\r,-1*\R) coordinate (b23b);
  \path (a24) +(-2*\r,2*\R) coordinate (b24a);
  \path (a24) +(0*\r,0*\R) coordinate (b24b);
  \path (a25) +(-1*\r,1*\R) coordinate (b25);
  
  \path (a31) +(0*\r,0*\R) coordinate (b31) {};
  \path (a32) +(-1*\r,1*\R) coordinate (b32a) {};
  \path (a32) +(1*\r,-1*\R) coordinate (b32b) {};
  \path (a33) +(-2*\r,2*\R) coordinate (b33a) {};
  \path (a33) +(0*\r,0*\R) coordinate (b33b) {};
  \path (a33) +(2*\r,-2*\R) coordinate (b33c) {};
  \path (a34) +(-1*\r,1*\R) coordinate (b34a) {};
  \path (a34) +(1*\r,-1*\R) coordinate (b34b) {};
  \path (a35) +(0*\r,0*\R) coordinate (b35) {};

  \path (a41) +(-1*\r,1*\R) coordinate (b41) {};
  \path (a42) +(-2*\r,2*\R) coordinate (b42a) {};
  \path (a42) +(0*\r,0*\R) coordinate (b42b) {};
  \path (a43) +(-1*\r,1*\R) coordinate (b43a) {};
  \path (a43) +(1*\r,-1*\R) coordinate (b43b) {};
  \path (a44) +(0*\r,0*\R) coordinate (b44a) {};
  \path (a44) +(2*\r,-2*\R) coordinate (b44b) {};
  \path (a45) +(1*\r,-1*\R) coordinate (b45) {};

  \path (a51) +(-2*\r,2*\R) coordinate (b51) {};
  \path (a52) +(-1*\r,1*\R) coordinate (b52) {};
  \path (a53) +(0*\r,0*\R) coordinate (b53) {};
  \path (a54) +(1*\r,-1*\R) coordinate (b54) {};
  \path (a55) +(2*\r,-2*\R) coordinate (b55) {};


    \begin{scope}[line width = 1pt, color=blue]
        \draw[line width = 4pt] (b11) -- (b21) -- (b31) -- (b41) -- (b51);

        \draw[line width = 4pt] (b12) -- (b22a) -- (b32a);
        \draw (b32a) -- (b42a);
        \draw (b22b) -- (b32b) -- (b42b);
        \draw[line width = 4pt] (b42b) -- (b52);

        \draw (b13) -- (b23a) -- (b33a);
        \draw[line width = 4pt] (b23b) -- (b33b);
        \draw (b33b) -- (b43a);
        \draw (b33c) -- (b43b);
        \draw[line width = 4pt] (b43b) -- (b53);

        \draw (b14) -- (b24a);
        \draw (b24b) -- (b34a);
        \draw (b34b) -- (b44a);
        \draw[line width = 4pt] (b44b) -- (b54);
    \end{scope}
    
    \begin{scope}[line width = 1pt, color=cyan]
        \draw (b12) -- (b22b);
        \draw (b22a) -- (b32b);
        \draw[line width = 4pt] (b32a) -- (b42b);
        \draw (b42a) -- (b52);

        \draw[line width = 4pt] (b13) -- (b23b);
        \draw (b23b) -- (b33c);
        \draw (b23a) -- (b33b);
        \draw[line width = 4pt] (b33b) -- (b43b);
        \draw (b33a) -- (b43a) -- (b53);

        \draw[line width = 4pt] (b14) -- (b24b) -- (b34b) -- (b44b);
        \draw (b24a) -- (b34a) -- (b44a) -- (b54);

        \draw[line width = 4pt] (b15) -- (b25) -- (b35) -- (b45) -- (b55);
    \end{scope}
    
    \begin{scope}[line width = 1pt, color=red]
        \draw (b21) -- (b22b);
        \draw[line width = 4pt] (b22a) -- (b23b);
        \draw (b23a) -- (b24b);
        \draw (b24a) -- (b25);

        \draw (b31) -- (b32b) -- (b33c);
        \draw[line width = 4pt] (b32a) -- (b33b) -- (b34b);
        \draw (b33a) -- (b34a) -- (b35);

        \draw[line width = 4pt] (b41) -- (b42b) -- (b43b) -- (b44b);
        \draw (b42a) -- (b43a) -- (b44a) -- (b45);

        \draw[line width = 4pt] (b51) -- (b52) -- (b53) -- (b54) -- (b55);
    \end{scope}

    \begin{scope}[line width = 1pt, color=orange]
        \draw[line width = 4pt] (b11) -- (b12) -- (b13) -- (b14) -- (b15);

        \draw[line width = 4pt] (b21) -- (b22a);
        \draw (b22a) -- (b23a) -- (b24a);
        \draw (b22b) -- (b23b);
        \draw[line width = 4pt] (b23b) -- (b24b) -- (b25);

        \draw[line width = 4pt] (b31) -- (b32a);
        \draw (b32a) -- (b33a);
        \draw (b32b) -- (b33b) -- (b34a);
        \draw (b33c) -- (b34b);
        \draw[line width = 4pt] (b34b) -- (b35);

        \draw (b41) -- (b42a);
        \draw (b42b) -- (b43a);
        \draw (b43b) -- (b44a);
        \draw[line width = 4pt] (b44b) -- (b45);
    \end{scope}


\node[plate] () at (b11) {$\begin{matrix} 0 & 0 \\ 0 & 4 \end{matrix}$};
\node[plate] () at (b12) {$\begin{matrix} 0 & 0 \\ 1 & 3 \end{matrix}$};
\node[plate] () at (b13) {$\begin{matrix} 0 & 0 \\ 2 & 2 \end{matrix}$};
\node[plate] () at (b14) {$\begin{matrix} 0 & 0 \\ 3 & 1 \end{matrix}$};
\node[plate] () at (b15) {$\begin{matrix} 0 & 0 \\ 4 & 0 \end{matrix}$};

\node[plate] () at (b21) {$\begin{matrix} 0 & 1 \\ 0 & 3 \end{matrix}$};
\node[plate] () at (b31) {$\begin{matrix} 0 & 2 \\ 0 & 2 \end{matrix}$};
\node[plate] () at (b41) {$\begin{matrix} 0 & 3 \\ 0 & 1 \end{matrix}$};
\node[plate] () at (b51) {$\begin{matrix} 0 & 4 \\ 0 & 0 \end{matrix}$};

\node[plate] () at (b52) {$\begin{matrix} 1 & 3 \\ 0 & 0 \end{matrix}$};
\node[plate] () at (b53) {$\begin{matrix} 2 & 2 \\ 0 & 0 \end{matrix}$};
\node[plate] () at (b54) {$\begin{matrix} 3 & 1 \\ 0 & 0 \end{matrix}$};
\node[plate] () at (b55) {$\begin{matrix} 4 & 0 \\ 0 & 0 \end{matrix}$};

\node[plate] () at (b25) {$\begin{matrix} 1 & 0 \\ 3 & 0 \end{matrix}$};
\node[plate] () at (b35) {$\begin{matrix} 2 & 0 \\ 2 & 0 \end{matrix}$};
\node[plate] () at (b45) {$\begin{matrix} 3 & 0 \\ 1 & 0 \end{matrix}$};

\node[plate] () at (b22a) {$\begin{matrix} 0 & 1 \\ 1 & 2 \end{matrix}$};

\node[plate] () at (b23b) {$\begin{matrix} 1 & 0 \\ 1 & 2 \end{matrix}$};

\node[plate] () at (b24b) {$\begin{matrix} 1 & 0 \\ 2 & 1 \end{matrix}$};

\node[plate] () at (b32a) {$\begin{matrix} 0 & 2 \\ 1 & 1 \end{matrix}$};

\node[plate] () at (b42b) {$\begin{matrix} 1 & 2 \\ 0 & 1 \end{matrix}$};

\node[plate] () at (b43b) {$\begin{matrix} 2 & 1 \\ 0 & 1 \end{matrix}$};

\node[plate] () at (b44b) {$\begin{matrix} 3 & 0 \\ 0 & 1 \end{matrix}$};

\node[plate] () at (b34b) {$\begin{matrix} 2 & 0 \\ 1 & 1 \end{matrix}$};

\node[plate] () at (b33b) {$\begin{matrix} 1 & 1 \\ 1 & 1 \end{matrix}$};



\end{tikzpicture}
    \caption{The $1$-skeleton of the top-dimensional spine from Figure~\ref{fig:generic}
    inside the tetrahedral graph of $\mult_4(\Qb)$.}
    \label{fig:multiset-spine-highlight}
\end{figure}

\bibliographystyle{amsalpha}
\bibliography{multisets}

@Unpublished{dm-cnc,
  author = {Dougherty, Michael and McCammond, Jon},
  title = {Continuous noncrossing partitions and weighted circular factorizations},
  note = {Preprint 2025. \texttt{arXiv:2507.00283}},
}

@unpublished{gcp2,
    author = {Dougherty, Michael and McCammond, Jon},
    title = {Geometric combinatorics of polynomials {II}: {P}olynomials and cell structures},
    note = {Preprint 2024. \texttt{arXiv:2410.03047}},
}

@book {LaZv04,
    AUTHOR = {Lando, Sergei K. and Zvonkin, Alexander K.},
     TITLE = {Graphs on surfaces and their applications},
    SERIES = {Encyclopaedia of Mathematical Sciences},
    VOLUME = {141},
      NOTE = {With an appendix by Don B. Zagier,
              Low-Dimensional Topology, II},
 PUBLISHER = {Springer-Verlag, Berlin},
      YEAR = {2004},
     PAGES = {xvi+455},
      ISBN = {3-540-00203-0},
   MRCLASS = {14H55 (05-02 05C10 05C30 05C50 14H10 14H30 32G15)},
  MRNUMBER = {2036721},
MRREVIEWER = {Athanase Papadopoulos},
       DOI = {10.1007/978-3-540-38361-1},
       URL = {https://doi.org/10.1007/978-3-540-38361-1},
}

@incollection {wachs07,
    AUTHOR = {Wachs, Michelle L.},
     TITLE = {Poset topology: tools and applications},
 BOOKTITLE = {Geometric combinatorics},
    SERIES = {IAS/Park City Math. Ser.},
    VOLUME = {13},
     PAGES = {497--615},
 PUBLISHER = {Amer. Math. Soc., Providence, RI},
      YEAR = {2007},
      ISBN = {978-0-8218-3736-8; 0-8218-3736-2},
   MRCLASS = {06B30 (05E10 52C35 55R80)},
  MRNUMBER = {2383132},
       DOI = {10.1090/pcms/013/09},
       URL = {https://doi-org.ezproxy.lafayette.edu/10.1090/pcms/013/09},
}

@article {tuffley02,
    AUTHOR = {Tuffley, Christopher},
     TITLE = {Finite subset spaces of {$S^1$}},
   JOURNAL = {Algebr. Geom. Topol.},
  FJOURNAL = {Algebraic \& Geometric Topology},
    VOLUME = {2},
      YEAR = {2002},
     PAGES = {1119--1145},
      ISSN = {1472-2747,1472-2739},
   MRCLASS = {54B20 (55Q52 55R80 57M25)},
  MRNUMBER = {1998017},
MRREVIEWER = {Jacob\ Mostovoy},
       DOI = {10.2140/agt.2002.2.1119},
       URL = {https://doi.org/10.2140/agt.2002.2.1119},
}

\end{document}